\newcommand{\C}{\ensuremath{\mathbb C}}
\newcommand{\F}{\ensuremath{\mathbb F}}
\newcommand{\N}{\ensuremath{\mathbb N}}
\renewcommand{\P}{\ensuremath{\mathcal P}}
\newcommand{\Q}{\ensuremath{\mathbb Q}}
\newcommand{\V}{\ensuremath{\mathcal V}}
\newcommand{\Z}{\ensuremath{\mathbb Z}}
\newcommand{\diag}{{\operatorname{diag}}}
\newcommand{\rank}{{\operatorname{rank}}}
\newcommand{\tr}{{\operatorname{trace}}}
\newcommand{\ie}{{\em i.e., }}
\newcommand{\eg}{{\em e.g., }}
\newtheorem{theorem}{Theorem}[section] 
\newtheorem{definition}[theorem]{Definition}
\newtheorem{corollary}[theorem]{Corollary}
\newtheorem{lemma}[theorem]{Lemma}
\newtheorem{proposition}[theorem]{Proposition}
\newcounter{ictr}
\newcounter{nctr}
\begin{document}

\title[$K_0$-theory with $n$-potents]
{$K_0$-theory of $n$-potents in rings and algebras}
\author{Efton Park and Jody Trout}
\address{Box 298900, Texas Christian University, Fort Worth, TX 76129}
\email{e.park@tcu.edu}
\address{6188 Kemeny Hall, Dartmouth College, Hanover, NH 03755}
\email{jody.trout@dartmouth.edu}
\thanks{$2010$ {\it Mathematics Subject Classification}: 18F30, 19A99, 19K99} 

\begin{abstract}
Let $n \geq 2$ be an integer. An \emph{$n$-potent} is an element $e$ of a ring
$R$  such that $e^n = e$.  In this paper, we study $n$-potents in matrices over
$R$ and use them to construct an abelian group $K_0^n(R)$.  If $A$ is a complex
algebra, there is a group isomorphism $K_0^n(A) \cong \bigl(K_0(A)\bigr)^{n-1}$
for all $n \geq 2$. However, for algebras over cyclotomic fields, this is not
true, in general. We consider $K_0^n$ as a covariant functor, and show that it
is also  functorial for a generalization of homomorphism called an
\emph{$n$-homomorphism}. 
\end{abstract}

\maketitle

\section{Introduction}
For more than forty years, $K$-theory has been an essential tool in studying
rings and algebras \cite{Bla98, ROS}.  Given a ring $R$, a simple functorial
object associated to $R$ is the abelian group $K_0(R)$.  There are multiple ways
of defining $K_0(R)$, but the most useful characterization when working with
operator algebras is to define $K_0(R)$ in terms of idempotents (or projections,
if an involution is present) in matrix algebras over $R$; \ie elements $e$ in
$M_k(R)$ for some $k$ with the feature that $e^2 = e$ ($p =p^* = p^2$ in the involutive case).  In this paper, we 
define, for each natural number $n \geq 2$, a group which we denote $K_0^n(R)$. 
This group is constructed from matrices $e$ over $R$ with the property that $e^n
= e$; we call such matrices \emph{$n$-potents}.  We define $K_0^n(R)$ for all
rings, unital or not, and show that $K_0^n$ determines a covariant functor from
rings to abelian groups.  

Let $\Q(n-1)$ be the cyclotomic field obtained from the rationals by adjoining
the $(n-1)$-th roots of unity. We show that $K_0^{n}$ is half-exact on the
subcategory of $\Q(n-1)$-algebras, and given any  such algebra $A$, we show
that $K_0^{n}(A)$ is isomorphic to a direct sum of $n-1$ copies of
$K_0(A)$.   Since a $\C$-algebra $A$ is a  $\Q(n-1)$-algebra for all $n$, 
whatever invariants are contained in $K_0^n(A)$ are already contained in
$K_0(A)$.  However, $K_0^p$ for $p \neq n$ may generate new groups for
cyclotomic algebras, \eg  $K_0^4(\Q(4)) \cong \Z \oplus 2\Z$ (Theorem
\ref{Q(4)_theorem}) which
is not isomorphic to  $K_0^4(\Q(3)) \cong \Z^3$. Thus, $K_0^4$ distinguishes
between the fields $\Q(3)$ and $\Q(4)$, but idempotent, and also tripotent
($n = 3$), $K$-theory does not.

The paper is organized as follows. In Section 2, we define various notions of
equivalence on the set of $n$-potents, and explore the relationships between
these equivalence relations.  Most of our results in this section mirror
analogous facts about idempotents, but in many cases the proofs differ or
are more delicate for $n$-potents.  In Section 3, we define $n$-potent
$K$-theory and study its properties and compute some examples.  Finally, in
Section 4, we consider $n$-homomorphisms on rings and algebras
\cite{Feig1,Feig2,HMM}, and show that $n$-potent $K$-theory is functorial for
such maps; this is a phenomenon that does not appear in ordinary idempotent
$K$-theory. 

The authors thank Dana Williams and Tom Shemanske for their helpful comments and
suggestions.

{\bf Note:}  Unless stated otherwise, all rings and algebras have a unit; \ie a
multiplicative identity, and all ring and algebra homomorphisms are unital.

\section{Equivalence of $n$-potents}

Fix a natural number $n \geq 2$. In this section, we develop the basic theory of $n$-potents, including various equivalence relations among them.  We begin by looking at $n$-potents over general rings, but eventually we will specialize to get a
well-behaved theory. 

\begin{definition}\label{definition of n-potent}
Let $R$ be a ring.  An element $e$ in $R$ is
called an \emph{$n$-potent} if $e^n = e$.  For $n = 2, 3, 4$, we use the terms
\emph{idempotent}, \emph{tripotent}, and \emph{quadripotent}, respectively.  The
set of all $n$-potents in $R$ is denoted $\mathcal{P}^n(R)$.  
\end{definition}

We begin with a very simple but useful fact about $n$-potents:

\begin{lemma}\label{idempotent from n-potent}
Suppose $e$ is an $n$-potent.  Then $e^{n-1}$ is an idempotent.
\end{lemma}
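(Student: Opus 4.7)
The plan is to prove the lemma by a direct exponent computation, exploiting only the defining relation $e^n = e$. I would start from $(e^{n-1})^2 = e^{2(n-1)} = e^{2n-2}$, then factor out a copy of $e^n$ by writing $e^{2n-2} = e^{n-2} \cdot e^n$ (this factorization requires $n \geq 2$, which is part of the standing hypothesis in Definition \ref{definition of n-potent}, so the exponent $n-2$ is a well-defined nonnegative integer). Applying $e^n = e$ then gives $e^{n-2} \cdot e = e^{n-1}$, so $(e^{n-1})^2 = e^{n-1}$, which is exactly the idempotent condition.

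There is essentially no obstacle here, and no appeal to any auxiliary machinery is needed: the hypothesis is a single polynomial identity in $e$, and the conclusion follows from one substitution. For the edge case $n = 2$, the statement degenerates into ``$e$ is an idempotent,'' which is just the hypothesis restated, so the argument is consistent across all allowed values of $n$. I would present the proof as a single displayed chain of equalities rather than breaking it into steps.
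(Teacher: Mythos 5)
Your proof is correct and is essentially identical to the paper's: both compute $(e^{n-1})^2 = e^{2n-2}$, peel off a factor of $e^n$, and replace it by $e$ using the defining relation. Your explicit remark about the $n=2$ edge case (where $e^{n-2}=e^0$ would otherwise be suspect in a general ring) is a small point of care the paper glosses over, but it changes nothing substantive.
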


\begin{proof} $
(e^{n-1})^2 = e^{n-1}e^{n-1} = e^ne^{n-2} 
= ee^{n-2} = e^{n-1}. $
\end{proof}

\begin{definition}\label{definition of equivalences}
Let $e$ and $f$ be $n$-potents in a ring $R$.  We say that $e$ and $f$ are 
\emph{algebraically equivalent} and write $e \sim_a f$ if there exist elements
$a$ and $b$ in $R$ such that $e = ab$ and $f = ba$.  We say that $e$ and $f$ are
\emph{similar}
and write $e \sim_s f$ if there exists an invertible element $z$ in $R$ with the
property that $f = zez^{-1}$.  
\end{definition}

\begin{lemma}\label{nice algebraic equivalence}
Suppose that $e$ and $f$ are algebraically equivalent $n$-potents in a ring $R$. 
Then the elements $a$ and $b$ described in Definition \ref{definition of
equivalences} can be chosen so that 
\begin{gather*}
a = e^{n-1}a = af^{n-1} = e^{n-1}af^{n-1}\\
b = f^{n-1}b = be^{n-1} = f^{n-1}be^{n-1}.
\end{gather*}
\end{lemma}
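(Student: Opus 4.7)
The plan is to \emph{replace} the given $a, b$ by their sandwiches with the idempotents $e^{n-1}$ and $f^{n-1}$. Specifically, set
\[
a' := e^{n-1} a f^{n-1}, \qquad b' := f^{n-1} b e^{n-1}.
\]
Once the conclusion is verified for $a', b'$, we are done, since the side conditions listed in the statement simply say that $a'$ is fixed by left multiplication by $e^{n-1}$ and by right multiplication by $f^{n-1}$ (and symmetrically for $b'$). By Lemma \ref{idempotent from n-potent}, both $e^{n-1}$ and $f^{n-1}$ are idempotents, so these side conditions are automatic from the definitions of $a'$ and $b'$.

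The substantive step is therefore checking $a'b' = e$ and $b'a' = f$. Expanding,
\[
a'b' \;=\; e^{n-1} a f^{n-1} f^{n-1} b e^{n-1} \;=\; e^{n-1} a f^{2(n-1)} b e^{n-1} \;=\; e^{n-1}\bigl(a f^{n-1} b\bigr) e^{n-1},
\]
using $f^{2(n-1)} = f^{n-1}$ (idempotency of $f^{n-1}$). The key algebraic identity I would then prove is
\[
a f^{n-1} b \;=\; e.
\]
For this I would rewrite $f^{n-1} = (ba)^{n-1} = b(ab)^{n-2}a = b e^{n-2} a$ (valid for $n \geq 2$, with the convention $(ab)^0 = 1$), so that
\[
a f^{n-1} b \;=\; a \cdot b e^{n-2} a \cdot b \;=\; (ab)\, e^{n-2}\, (ab) \;=\; e \cdot e^{n-2} \cdot e \;=\; e^{n} \;=\; e.
\]
Plugging back in, $a'b' = e^{n-1} \cdot e \cdot e^{n-1} = e^{2n-1} = e^{n-1}\cdot e^n = e^{n-1}\cdot e = e^n = e$. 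The identity $b'a' = f$ follows by interchanging the roles of $e$ and $f$ (and of $a$ and $b$).

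I expect the only real obstacle to be bookkeeping the exponents in the identity $a f^{n-1} b = e$; everything else is mechanical consequence of Lemma \ref{idempotent from n-potent}. The reduction $(ba)^{n-1} = b(ab)^{n-2}a$ is the trick that converts the power of $f$ into a power of $e$ sandwiched by $a$ and $b$, and this is the pivot on which the whole argument turns.
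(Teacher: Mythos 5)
Your proposal is correct and follows essentially the same route as the paper: replace $a,b$ by $e^{n-1}af^{n-1}$ and $f^{n-1}be^{n-1}$, use idempotency of $e^{n-1}$ and $f^{n-1}$ for the side conditions, and verify $a'b'=e$ via the identity $af^{n-1}b=a(ba)^{n-1}b=(ab)^n=e$. The paper phrases this last identity directly as $(ab)^n$ rather than rewriting $f^{n-1}=be^{n-2}a$, but the computation is the same.
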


\begin{proof}
Choose elements $\tilde{a}$ and $\tilde{b}$ in $R$ so that 
$\tilde{a}\tilde{b} = e$ and $\tilde{b}\tilde{a} = f$.
Set $a = e^{n-1}\tilde{a}f^{n-1}$ and $b = f^{n-1}\tilde{b}e^{n-1}$. 
Using Lemma \ref{idempotent from n-potent}, we have
\begin{multline*}
ab = (e^{n-1}\tilde{a}f^{n-1})(f^{n-1}\tilde{b}e^{n-1})
= e^{n-1}\tilde{a}f^{n-1}\tilde{b}e^{n-1} \\
= e^{n-1}(\tilde{a}\tilde{b})^ne^{n-1} = 
e^{n-1}e^ne^{n-1} = (e^{n-1})^2e^n = e^{n-1}e = e^n = e.
\end{multline*}
Similarly, $ba = f$.  The two strings of equalities in the statement of the
lemma then follow easily.
\end{proof}

\begin{proposition}\label{equivalence relations}
The relations $\sim_a$ and $\sim_s$ are equivalence relations on
$\mathcal{P}^n(R)$.  
\end{proposition}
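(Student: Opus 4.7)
The plan is to dispatch $\sim_s$ quickly and concentrate the real work on algebraic equivalence.

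For similarity $\sim_s$, all three axioms are routine: reflexivity with $z = 1$; symmetry by passing from $z$ to $z^{-1}$; and transitivity by conjugating with a product $wz$ of invertible elements. No $n$-potent-specific feature enters here, since the $n$-potent condition is preserved automatically under conjugation.

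For algebraic equivalence $\sim_a$, I would first note a small subtlety in reflexivity: the naive choice $a = b = e$ yields $ab = e^2$, which equals $e$ only when $n = 2$. The fix is to take $a = e^{n-1}$ and $b = e$, so that $ab = e^n = e$ and $ba = e \cdot e^{n-1} = e^n = e$. Symmetry is immediate by interchanging the roles of $a$ and $b$ in the definition.

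The main obstacle is transitivity. Given $ab = e$, $ba = f$, $cd = f$, $dc = g$, the obvious attempt $u = ac$, $v = db$ yields
\[
uv = a(cd)b = afb = a(ba)b = (ab)^2 = e^2,
\]
which fails for $n > 2$. The idea I would pursue is to exploit the identity $(ab)^n = a(ba)^{n-1}b = a f^{n-1} b$, which equals $e^n = e$, and to express $f^{n-1}$ through $c$, $d$, $g$ using $f = cd$, namely $f^{n-1} = (cd)^{n-1} = c(dc)^{n-2}d = c g^{n-2} d$. This points to the choice $u = ac$ and $v = g^{n-2}\,d b$; a direct check then gives $uv = a(cg^{n-2}d)b = a f^{n-1} b = e$, and symmetrically $vu = g^{n-2} d(ba) c = g^{n-2} d f c = g^{n-2}(dc)^2 = g^n = g$, establishing $e \sim_a g$. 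Only the defining identities $e^n = e$, $f^n = f$, $g^n = g$ are used; in particular, Lemma~\ref{nice algebraic equivalence} is not needed for transitivity, though it would offer an alternative route through normalized representatives.
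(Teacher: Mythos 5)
Your proof is correct and rests on the same key idea as the paper's: the naive witnesses $ac$, $db$ give $e^2$ rather than $e$, and one repairs this by inserting an $(n-2)$-th power of one of the $n$-potents (you use $g^{n-2}$ in front of $db$; the paper uses $s = af^{n-2}c$, $t = db$), after which both computations reduce to $af^{n-1}b = (ab)^n = e$ and its mirror image. The remaining verifications (reflexivity via $a = e^{n-1}$, $b = e$, symmetry, and the three axioms for $\sim_s$) match the routine parts the paper leaves to the reader.
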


\begin{proof}
The only nonobvious point to establish is that $\sim_a$ is transitive.  Let $e$,
$f$, and $g$ be elements of $\mathcal{P}^n(R)$, and suppose that
$e \sim_a f \sim_a g$.  Choose elements $a$, $b$, $c$ and $d$ in $R$ so that
$e = ab$, $f = ba = cd$, and $g = dc$, and set $s = af^{n-2}c$ and 
$t = db$.  Then
\[
st = af^{n-2}cdb = af^{n-1}b = a(ba)^{n-1}b = (ab)^n = e^n = e
\]
and
\[
ts = dbaf^{n-2}c = df^{n-1}c = d(cd)^{n-1}c = (dc)^n = g^n = g.
\]
\end{proof}

\begin{proposition}\label{similarity implies algebraic}
If $e$ and $f$ are similar $n$-potents in a ring $R$, then they are
algebraically equivalent.
\end{proposition}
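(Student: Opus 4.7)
The plan is to mimic the classical idempotent proof, with the only adjustment being that we must replace one of the factors of $e$ by $e^{n-1}$ in order to exploit the identity $e^n = e$ (rather than $e^2 = e$).

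Starting from the hypothesis $f = zez^{-1}$ with $z \in R$ invertible, I would write
\[
a = e z^{-1}, \qquad b = z e^{n-1},
\]
and then compute the two products directly. First,
\[
ab = e z^{-1} z e^{n-1} = e \cdot e^{n-1} = e^n = e,
\]
which uses only that $e$ is an $n$-potent. Second,
\[
ba = z e^{n-1} e z^{-1} = z e^n z^{-1} = z e z^{-1} = f.
\]
Thus the pair $(a,b)$ witnesses the algebraic equivalence $e \sim_a f$.

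There is essentially no obstacle; the only choice to make is which of the two factors in a factorization of $e$ should absorb the power $n-1$, and writing $e = e \cdot e^{n-1}$ rather than $e = e \cdot e$ immediately makes the conjugation by $z$ collapse to $f$. (One could equally take $a = e^{n-1} z^{-1}$ and $b = z e$; the verification is identical.) The output $a, b$ even satisfies the normalization of Lemma \ref{nice algebraic equivalence} automatically, since $e^{n-1} a = e^{n-1} e z^{-1} = e^n z^{-1} = e z^{-1} = a$ and $a f^{n-1} = e z^{-1} (z e z^{-1})^{n-1} = e z^{-1} z e^{n-1} z^{-1} = e^n z^{-1} = a$, and similarly for $b$.
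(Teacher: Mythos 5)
Your proof is correct and coincides with the paper's own argument: the authors make exactly the same choice $a = ez^{-1}$, $b = ze^{n-1}$ and verify $ab = e$, $ba = f$ in the same way. The extra observation that this pair already satisfies the normalization of Lemma \ref{nice algebraic equivalence} is a nice bonus but not needed here.
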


\begin{proof}
Choose an invertible element $z$ in  $R$ such  that $f = zez^{-1}$, and set
$a = ez^{-1}$ and $b = ze^{n-1}$.  Then $ab = e^n = e$ and $ba = ze^nz^{-1} =
f$.
\end{proof}

As is the case with idempotents, algebraic equivalence does not imply similarity
in general.  However, we do have the following result, just as for idempotents:

\begin{proposition}\label{algebraic stably implies similarity}
Suppose that $e$ and $f$ are algebraically equivalent $n$-potents in a ring $R$. 
Then
\[
\begin{pmatrix} e & 0 \\ 0 & 0 \end{pmatrix} \sim_s
\begin{pmatrix} f & 0 \\ 0 & 0 \end{pmatrix}
\]
in the ring $M_2(R)$ of $2 \times 2$ matrices over $R$.
\end{proposition}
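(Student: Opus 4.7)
The plan is to mimic the classical proof for idempotents, replacing the complementary idempotents $1-e$ and $1-f$ by $1 - e^{n-1}$ and $1 - f^{n-1}$, which really \emph{are} idempotents by Lemma \ref{idempotent from n-potent}. First I would invoke Lemma \ref{nice algebraic equivalence} to choose $a, b \in R$ with $ab = e$, $ba = f$, $a = e^{n-1}a = af^{n-1}$, and $b = f^{n-1}b = be^{n-1}$. Two intertwining identities follow immediately: $be = b(ab) = (ba)b = fb$ and $ea = (ab)a = a(ba) = af$.

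Next I would set
\[
z = \begin{pmatrix} b & 1 - f^{n-1} \\ 1 - e^{n-1} & a \end{pmatrix}, \qquad
w = \begin{pmatrix} a & 1 - e^{n-1} \\ 1 - f^{n-1} & b \end{pmatrix},
\]
and verify by a direct $2 \times 2$ computation (using $(1-e^{n-1})e = 0 = f(1-f^{n-1})$ together with $be = fb$) that $z\cdot \diag(e,0) = \diag(f,0)\cdot z$. Consequently the entire task reduces to showing that $z$ is invertible in $M_2(R)$.

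Expanding the products $zw$ and $wz$, the relations from Lemma \ref{nice algebraic equivalence} annihilate all four off-diagonal entries (for example $a(1 - f^{n-1}) = a - af^{n-1} = 0$), while the idempotency $(1 - e^{n-1})^2 = 1 - e^{n-1}$ collapses the squared terms, yielding
\[
zw = \diag\bigl(f + 1 - f^{n-1},\, e + 1 - e^{n-1}\bigr), \qquad wz = \diag\bigl(e + 1 - e^{n-1},\, f + 1 - f^{n-1}\bigr).
\]

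The main obstacle is then to show that each scalar entry $x + 1 - x^{n-1}$ (for $x \in \{e,f\}$) is invertible in $R$; when $n = 2$ these are just $1$, but the issue is genuinely new for $n \geq 3$. The key observation is that $x \cdot x^{n-1} = x^{n-1} \cdot x = x$ while $x(1 - x^{n-1}) = (1 - x^{n-1})x = 0$, so $x$ lives in the corner $x^{n-1} R x^{n-1}$, where $x^{n-1}$ is the unit and $x^{n-2}$ is a two-sided inverse of $x$. A short calculation exhibits the explicit inverse
\[
(x + 1 - x^{n-1})(x^{n-2} + 1 - x^{n-1}) = x^{n-1} + (1 - x^{n-1}) = 1,
\]
with the reverse product identical. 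Hence both $zw$ and $wz$ are invertible in $M_2(R)$, so $z$ itself is invertible, and $z\,\diag(e,0)\,z^{-1} = \diag(f,0)$ as required.
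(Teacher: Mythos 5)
Your proof is correct, and it takes a genuinely different (though related) route from the paper's. The paper also starts from Lemma \ref{nice algebraic equivalence}, but it then tweaks the off-corner entries so that the conjugating matrices become involutions: it sets
$u = \begin{pmatrix} 1 - f^{n-1} & b \\ af^{n-2} & 1 - e^{n-1} \end{pmatrix}$ and
$v = \begin{pmatrix} 1 - e^{n-1} & e^{n-1} \\ e^{n-1} & 1 - e^{n-1} \end{pmatrix}$,
checks $u^2 = v^2 = 1$, and conjugates by $z = uv$. The replacement of $a$ by $af^{n-2}$ is exactly what forces $u^2=1$ (since $b\cdot af^{n-2} = f^{n-1}$ and $af^{n-2}\cdot b = e^{n-1}$), so invertibility comes for free and no new invertible elements need to be produced. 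You instead keep the textbook idempotent-style intertwiner $z = \begin{pmatrix} b & 1-f^{n-1} \\ 1-e^{n-1} & a\end{pmatrix}$ with quasi-inverse $w$, accept that $zw$ and $wz$ are only diagonal rather than the identity, and pay for it by proving that $x + 1 - x^{n-1}$ is invertible with inverse $x^{n-2} + 1 - x^{n-1}$ for an $n$-potent $x$ (with $n=2$ trivial). All the computations you indicate check out: the off-diagonal entries of $zw$ and $wz$ vanish by the absorption identities $a = e^{n-1}a = af^{n-1}$, $b = f^{n-1}b = be^{n-1}$; the intertwining $z\,\diag(e,0) = \diag(f,0)\,z$ follows from $be = fb$ and $(1-e^{n-1})e = 0 = f(1-f^{n-1})$; and the cross terms in $(x + 1 - x^{n-1})(x^{n-2} + 1 - x^{n-1})$ die because $x^{2n-3} = x^{n-2}$ for $n \geq 3$. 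Having both invertible products $zw$ and $wz$ does give a two-sided inverse for $z$. Your approach makes the Peirce-decomposition structure (relative to the idempotent $e^{n-1}$) explicit and stays closest to the classical $n=2$ argument; the paper's buys a slicker verification at the cost of less transparent matrices.
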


\begin{proof}
Choose elements $a$ and $b$ in $R$ so that $e = ab$ and $f = ba$; without loss
of generality, we assume that $a$ and $b$ satisfy the conclusions of
Lemma \ref{nice algebraic equivalence}.  Define
\[
u = \begin{pmatrix} 1 - f^{n-1} & b \\ af^{n-2} & 1 - e^{n-1} \end{pmatrix}
\]
and
\[
v = \begin{pmatrix} 1 - e^{n-1} & e^{n-1} \\ e^{n-1} & 1 - e^{n-1}
\end{pmatrix}.
\]
Straightforward computation yields that both $u^2$ and $v^2$ equal the identity
matrix in $M_2(R)$, and thus each is its own inverse.   Set $z = uv$.  Then we compute that
\begin{align*}
z\begin{pmatrix} e & 0 \\ 0 & 0 \end{pmatrix}z^{-1} &=
\begin{pmatrix} 1 - f^{n-1} & b \\ af^{n-2} & 1 - e^{n-1} \end{pmatrix} 
\begin{pmatrix} 0 & 0 \\ 0 & e \end{pmatrix}
\begin{pmatrix} 1 - f^{n-1} & b \\ af^{n-2} & 1 - e^{n-1} \end{pmatrix}  \\
&=\begin{pmatrix} beaf^{n-2} & 0 \\ 0 & 0 \end{pmatrix} = \begin{pmatrix} f & 0 \\ 0 & 0 \end{pmatrix}
\end{align*}
since $beaf^{n-2} = b(ab)a(ba)^{n-2} = (ba)^n = f^n = f.$
\end{proof}

\begin{definition}\label{definition of orthogonal}
We say $n$-potents $e$ and $f$ in a ring $R$ are \emph{orthogonal} if 
$ef = fe = 0$, in which case we write $e \perp f$.
\end{definition}

\noindent The next result follows immediately by mathematical induction.

\begin{proposition}\label{add orthogonal}
Let $e$ and $f$ be orthogonal $n$-potents in a ring $R$.  Then 
$(e + f)^k = e^k + f^k$.  In particular, $e + f$ is an $n$-potent.
\end{proposition}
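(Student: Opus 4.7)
The plan is to prove the identity $(e+f)^k = e^k + f^k$ by induction on $k \geq 1$, and then deduce the ``in particular'' statement by setting $k = n$ and using $e^n = e$, $f^n = f$.

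The base case $k = 1$ is immediate. For the inductive step, assuming $(e+f)^k = e^k + f^k$, I would expand
\[
(e+f)^{k+1} = (e^k + f^k)(e + f) = e^{k+1} + e^k f + f^k e + f^{k+1}.
\]
The heart of the argument is that the cross terms vanish: since $ef = fe = 0$, a trivial secondary induction shows $e^k f = e^{k-1}(ef) = 0$ and similarly $f^k e = 0$ for all $k \geq 1$. Substituting back gives $(e+f)^{k+1} = e^{k+1} + f^{k+1}$, completing the induction.

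Finally, specializing to $k = n$ and using the hypothesis that $e$ and $f$ are $n$-potents, we obtain
\[
(e+f)^n = e^n + f^n = e + f,
\]
so $e + f$ is itself an $n$-potent.

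There is no real obstacle here; the whole argument rests on the fact that orthogonality $ef = fe = 0$ propagates to all positive powers, so the binomial-style expansion collapses to just the two ``diagonal'' terms. The only thing to be mildly careful about is restricting to $k \geq 1$ (the statement fails at $k = 0$ in a unital ring), but this is consistent with the intended use $k = n \geq 2$.
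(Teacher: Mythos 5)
Your proof is correct and follows essentially the same route as the paper: induction on $k$, expanding $(e+f)^{k+1} = (e^k+f^k)(e+f)$ and killing the cross terms by writing $e^k f = e^{k-1}(ef) = 0$ and $f^k e = f^{k-1}(fe) = 0$. The only difference is cosmetic --- you spell out the specialization to $k=n$, which the paper leaves implicit.
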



\begin{proposition}\label{addition of algebraic}
For $i = 1, 2$, let $e_i$ and $f_i$ be algebraically equivalent $n$-potents in a
ring $R$.  Suppose that $e_1$ and $f_1$ are orthogonal to $e_2$ and $f_2$,
respectively.  Then $e_1 + e_2$ and $f_1 + f_2$ are algebraically equivalent. 
\end{proposition}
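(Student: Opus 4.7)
The plan is to promote the algebraic equivalences $e_i \sim_a f_i$ directly to an equivalence of the sums by summing the witnessing elements, after first putting them in the convenient form guaranteed by Lemma \ref{nice algebraic equivalence}. The orthogonality hypothesis, combined with those normalized forms, will force the cross terms to vanish.

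First I would note that $e_1+e_2$ and $f_1+f_2$ are themselves $n$-potents by Proposition \ref{add orthogonal}, so that it makes sense to talk about algebraic equivalence between them. Next, applying Lemma \ref{nice algebraic equivalence} to each pair $(e_i,f_i)$, I choose $a_i,b_i \in R$ with $a_ib_i = e_i$, $b_ia_i = f_i$, and
\[
a_i = e_i^{n-1} a_i f_i^{n-1}, \qquad b_i = f_i^{n-1} b_i e_i^{n-1}.
\]
Then set $a = a_1+a_2$ and $b = b_1+b_2$.

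The key computation is to expand
\[
ab = a_1b_1 + a_2b_2 + a_1b_2 + a_2b_1 = e_1+e_2 + a_1b_2 + a_2b_1,
\]
and to show the two cross terms are zero. Using the normalized forms,
\[
a_1b_2 = \bigl(e_1^{n-1}a_1 f_1^{n-1}\bigr)\bigl(f_2^{n-1}b_2 e_2^{n-1}\bigr),
\]
so it suffices to show $f_1^{n-1} f_2^{n-1} = 0$. But $f_1 \perp f_2$ gives $f_1 f_2 = 0$, hence for $n \geq 2$,
\[
f_1^{n-1} f_2^{n-1} = f_1^{n-2}(f_1 f_2) f_2^{n-2} = 0
\]
(interpreting $f_i^0 = 1$ when $n=2$). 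The same reasoning handles $a_2 b_1$, so $ab = e_1+e_2$. Symmetrically, using $e_1 \perp e_2$ and the identities $a_i = e_i^{n-1}a_i f_i^{n-1}$, $b_i = f_i^{n-1}b_i e_i^{n-1}$, the cross terms in $ba$ vanish and $ba = f_1+f_2$, giving the required algebraic equivalence.

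The only real obstacle is recognizing that one must pass to the normalized $a_i, b_i$ provided by Lemma \ref{nice algebraic equivalence}; without this normalization, the orthogonality of $e_i$ and $f_i$ does not obviously annihilate arbitrary products $a_1 b_2$ and $a_2 b_1$. Once the normalization is in place, the argument reduces to the observation that orthogonal $n$-potents have orthogonal $(n-1)$-st powers, which is immediate.
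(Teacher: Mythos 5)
Your proof is correct and follows essentially the same route as the paper: normalize the witnesses via Lemma \ref{nice algebraic equivalence}, sum them, and use orthogonality of $f_1,f_2$ (resp.\ $e_1,e_2$) to kill the cross terms in $ab$ (resp.\ $ba$). The only difference is that you spell out why $f_1^{n-1}f_2^{n-1}=0$, which the paper leaves implicit.
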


\begin{proof} For $i = 1, 2$, choose $a_i$ and $b_i$ so that $e_i = a_ib_i$, 
$f_i = b_ia_i$, and so that $a_i$ and $b_i$ satisfy the conclusion of Lemma
\ref{nice algebraic equivalence}.  Then
\[
a_1b_2 = a_1f_1^{n-1}f_2^{n-1}b_2 = 0.
\]
Similarly, $b_2a_1$, $a_2b_1$, and $b_1a_2$ are also zero.  Thus
\[
(a_1 + a_2)(b_1 + b_2) = a_1b_1 + a_2b_2 = e_1 + e_2
\]
and
\[
(b_1 + b_2)(a_1 + a_2) = b_1a_1 + b_2a_2 = f_1 + f_2,
\]
whence $e_1 + e_2$ is algebraically equivalent to $f_1 + f_2$.  
\end{proof}

\begin{proposition}\label{stable equivalence}
Let $e$ and $f$ be $n$-potents in a ring $R$.  
\begin{enumerate}
\item[(a)] $\begin{pmatrix} e & 0 \\ 0 & f \end{pmatrix} \sim_a
\begin{pmatrix} f & 0 \\ 0 & e \end{pmatrix}$ and 
$\begin{pmatrix} e & 0 \\ 0 & 0 \end{pmatrix} \sim_a
\begin{pmatrix} 0 & 0 \\ 0 & e \end{pmatrix}$.
\item[(b)] If $e \perp f$ then $\begin{pmatrix} e & 0 \\ 0 & f \end{pmatrix}
\sim_a  \begin{pmatrix} e+ f & 0 \\ 0 & 0 \end{pmatrix}$.
\end{enumerate}
\end{proposition}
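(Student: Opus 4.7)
For part (a), the plan is to write down explicit $2 \times 2$ matrices $a$ and $b$ over $R$ that realize the algebraic equivalence directly. Using only that $e^n = e$ and $f^n = f$, a natural choice is
\[
a = \begin{pmatrix} 0 & e \\ f & 0 \end{pmatrix}, \qquad
b = \begin{pmatrix} 0 & f^{n-1} \\ e^{n-1} & 0 \end{pmatrix}.
\]
A short computation should yield $ab = \diag(e^n, f^n) = \diag(e,f)$ and $ba = \diag(f^n, e^n) = \diag(f,e)$, establishing the first equivalence in (a). The second equivalence is then just the specialization $f = 0$.

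For part (b), I would look for $a, b \in M_2(R)$ whose two products give the two required block matrices. A candidate, inspired by the structure of (a), is
\[
a = \begin{pmatrix} e & f \\ 0 & 0 \end{pmatrix}, \qquad
b = \begin{pmatrix} e^{n-1} & 0 \\ f^{n-1} & 0 \end{pmatrix}.
\]
One then checks that $ab = \begin{pmatrix} e^n + f^n & 0 \\ 0 & 0 \end{pmatrix} = \diag(e+f, 0)$ by $n$-potency of $e$ and $f$. For $ba$, the diagonal entries are $e^{n-1}e = e$ and $f^{n-1}f = f$, while the off-diagonal entries $e^{n-1}f = e^{n-2}(ef)$ and $f^{n-1}e = f^{n-2}(fe)$ vanish by the orthogonality hypothesis $ef = fe = 0$ (with the convention $e^0 = 1$ when $n = 2$). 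Thus $ba = \diag(e,f)$.

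The main (and essentially only) obstacle is guessing the correct ansatz; once the matrices are chosen, the verifications are routine, relying on $n$-potency to collapse $e^n$ and $f^n$ to $e$ and $f$ and, in (b), on orthogonality to annihilate the cross terms. No auxiliary lemmas beyond the definition of $n$-potent are needed; in particular, the more delicate Lemma~\ref{nice algebraic equivalence} is unnecessary here, because we are constructing the elements $a$ and $b$ explicitly rather than extracting them from an abstract hypothesis.
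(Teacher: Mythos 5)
Your proof is correct and essentially identical to the paper's: part (a) uses exactly the same matrices, and in part (b) your $a$ and $b$ are the paper's pair with the roles of the two factors interchanged, which changes nothing since algebraic equivalence is symmetric in $ab$ and $ba$. The only point the paper adds is the one-line observation that $e+f$ is itself an $n$-potent when $e \perp f$ (Proposition \ref{add orthogonal}), which is needed for $\diag(e+f,0)$ to be an element of $\mathcal{P}^n(M_2(R))$ in the first place; you should include that remark.
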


\begin{proof}
Define
\[
a = \begin{pmatrix} 0 & e \\ f & 0 \end{pmatrix}\quad\text{and}\quad
b = \begin{pmatrix} 0 & f^{n-1} \\ e^{n-1} & 0 \end{pmatrix}
\]
in $M_2(R)$.  Then
\[
ab = \begin{pmatrix} 0 & e \\ f & 0 \end{pmatrix}
\begin{pmatrix} 0 & f^{n-1} \\ e^{n-1} & 0 \end{pmatrix} =
\begin{pmatrix} e^n & 0 \\ 0 & f^n \end{pmatrix}
= \begin{pmatrix} e & 0 \\ 0 & f \end{pmatrix}
\]
and
\[
ba = \begin{pmatrix} 0 & f^{n-1} \\ e^{n-1} & 0 \end{pmatrix}
\begin{pmatrix}0 & e \\ f & 0 \end{pmatrix} =
\begin{pmatrix} f^n & 0  \\ 0 & e^n \end{pmatrix}
= \begin{pmatrix} f & 0 \\ 0 & e \end{pmatrix},
\]
which establishes the first part of (a); to obtain the second part, simply take
$f$ to be zero.

To prove (b), first observe that if $e \perp f$, then $e + f$ is an $n$-potent
by Proposition \ref{add orthogonal}.  Define
\[
a = \begin{pmatrix} e & 0 \\ f & 0 \end{pmatrix} \quad\text{and}\quad
b = \begin{pmatrix} e^{n-1} & f^{n-1} \\ 0 & 0 \end{pmatrix}.
\]
Then
\[
ab = \begin{pmatrix} e & 0 \\ f & 0 \end{pmatrix}
\begin{pmatrix} e^{n-1} & f^{n-1} \\ 0 & 0 \end{pmatrix} =
\begin{pmatrix} e^n & ef^{n-1} \\ fe^{n-1} & f^n \end{pmatrix} =
\begin{pmatrix} e & 0 \\ 0 & f \end{pmatrix}
\]
and
\[
ba = \begin{pmatrix} e^{n-1} & f^{n-1} \\ 0 & 0 \end{pmatrix}
\begin{pmatrix} e & 0 \\ f & 0 \end{pmatrix} = 
\begin{pmatrix} e^n + f^n & 0  \\ 0 & 0 \end{pmatrix} = 
\begin{pmatrix} e + f & 0 \\ 0 & 0 \end{pmatrix},
\]
whence the result follows.
\end{proof}

Later in this paper we will restrict our attention to $n$-potent $K$-theory of {\em cyclotomic algebras}:

\begin{definition}\label{definition of cyclotomic algebra}
For each integer $n \geq 2$, the {\em cyclotomic field} $\Q(n-1)$ is the
field obtained by adjoining the $(n-1)$st primitive root of unity $\zeta_{n-1} = e^{2\pi i / (n-1)}$
 to the field $\Q$ of rational numbers.  A {\em cyclotomic algebra} is a $\Q(n-1)$-algebra for some $n \geq 2$.
\end{definition}

Observe that $\Q(n-1) \subset \C$, and therefore every $\C$-algebra is canonically a $\Q(n-1)$-algebra for all
$n$. 

\begin{definition}\label{definition of n-partition}
Let $\F$ be a field and let $A$ be an $\F$-algebra with unit.
 An {\em $n$-partition of unity} is an ordered $n$-tuple $(e_0, e_1, \dots,
e_{n-1})$ of idempotents in $A$ such that 
\begin{enumerate}
\item $e_0 + e_1 + \cdots + e_{n-1} = 1$;
\item  $e_0, e_1, \dots, e_{n-1}$ are pairwise orthogonal;
 \ie $e_j e_k = \delta_{jk} e_k$ for all $0 \leq j, k \leq n-1$.
\end{enumerate}
\end{definition}

Note that $e_0 = 1 - (e_1 + \cdots + e_{n-1})$ is completely determined by $e_1, 
e_2, \dots, e_{n-1}$ and is thus redundant in the notation for an $n$-partition
of unity. 

Cyclotomic algebras admit a distinguished $n$-partition
of unity.  Set $\omega_ 0 = 0$ and let $\omega_k = e^{2 \pi i(k-1) / ( n-1)}$ for
$1\leq k \leq n-1$.  Note that 
$\omega_1, \dots, \omega_{n-1}$ are the $(n-1)$st roots of unity,
 and $\Omega_n = \{\omega_0, \omega_1, \dots, \omega_{n-1}\}$ is the set
 of roots of the polynomial equation $x_n - x = 0$.

\begin{theorem}\label{partition of n-potents}
Let $A$ be a $\Q(n-1)$-algebra with unit, and suppose $e$ is an $n$-potent in
$A$.  Then there exists a unique $n$-partition of unity
$(e_0, e_1, \dots, e_{n-1})$ in $A$ such that
\[
e = \sum_{k = 1}^{n-1} \omega_k e_k.
\]
\end{theorem}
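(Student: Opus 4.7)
The plan is to use Lagrange interpolation at the roots $\omega_0, \omega_1, \dots, \omega_{n-1}$ of $x^n - x$. Since $e^n = e$, the element $e$ satisfies the polynomial equation $x^n - x = 0$, and over $\Q(n-1)$ this polynomial splits into distinct linear factors $\prod_{i}(x - \omega_i)$. So $e$ behaves like an element whose ``spectrum'' is contained in $\Omega_n$, and the $e_k$ we seek should be its spectral projections onto the one-point subsets $\{\omega_k\}$.

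Explicitly, I would define, for each $0 \leq k \leq n-1$, the Lagrange interpolation polynomial
\[
p_k(x) = \prod_{j \neq k} \frac{x - \omega_j}{\omega_k - \omega_j} \in \Q(n-1)[x],
\]
which is well defined because the $\omega_j$ are distinct elements of the field $\Q(n-1)$, and set $e_k = p_k(e) \in A$. The characterizing property $p_k(\omega_i) = \delta_{ki}$ then does all the work. First, $p_j(x) p_k(x) - \delta_{jk} p_k(x)$ vanishes at each $\omega_i$ and hence, by unique factorization in $\Q(n-1)[x]$, is divisible by $\prod_{i}(x - \omega_i) = x^n - x$; since $e^n - e = 0$ in $A$, this gives $e_j e_k = \delta_{jk} e_k$, so the $e_k$ are pairwise orthogonal idempotents. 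Second, both $\sum_k p_k(x) - 1$ and $\sum_k \omega_k p_k(x) - x$ have degree at most $n-1$ and vanish at all $n$ points $\omega_0, \dots, \omega_{n-1}$, so they are identically zero; evaluating at $e$ yields $\sum_{k=0}^{n-1} e_k = 1$ and, using $\omega_0 = 0$, $e = \sum_{k=1}^{n-1} \omega_k e_k$. This establishes existence.

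For uniqueness, suppose $(e_0', e_1', \dots, e_{n-1}')$ is any $n$-partition of unity satisfying $e = \sum_k \omega_k e_k'$. Orthogonality and idempotency of the $e_j'$ give $e^m = \sum_j \omega_j^m e_j'$ for every $m \geq 1$ by an immediate induction, and combining with $1 = \sum_j e_j'$ for the constant term yields $q(e) = \sum_j q(\omega_j) e_j'$ for every $q \in \Q(n-1)[x]$. Applying this with $q = p_k$ gives $p_k(e) = e_k'$, so $e_k' = e_k$ for every $k$.

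The only delicate point I expect is the polynomial-divisibility step in the existence argument: one must invoke that $\Q(n-1)[x]$ is a UFD whose primes $x - \omega_j$ are distinct, so that pairwise divisibility of $p_j p_k - \delta_{jk} p_k$ by each $x - \omega_i$ upgrades to divisibility by their product. Beyond this, the argument is essentially a spectral decomposition in disguise; the hypothesis that $A$ is a $\Q(n-1)$-algebra is used precisely so that the denominators $\omega_k - \omega_j$ appearing in the $p_k$ are invertible scalars in $A$, which is exactly what fails over a general ring and why this theorem is restricted to cyclotomic algebras.
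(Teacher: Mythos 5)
Your proof is correct and follows essentially the same route as the paper: Lagrange interpolation at the $n$ distinct roots $\omega_0,\dots,\omega_{n-1}$ of $x^n-x$ over $\Q(n-1)$, with $e_k = p_k(e)$. In fact your writeup is slightly more complete than the paper's, which asserts idempotency and orthogonality of the $p_k(e)$ by evaluating on $\Omega_n$ without spelling out the divisibility-by-$x^n-x$ step, and which omits the uniqueness argument entirely; your spectral-synthesis argument $q(e)=\sum_j q(\omega_j)e_j'$ applied to $q=p_k$ supplies that missing piece.
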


\begin{proof}
Let $p_0, p_1, \dots, p_{n-1} \in \Q(n-1)[x]$ be the Lagrange polynomials   \[
p_k(x) = \frac{\prod_{j \neq k} (x - \omega_j)}
{\prod_{j \neq k} (\omega_k - \omega_j)}.
\]
In particular, $p_0(x) = 1 - x^{n-1}$.  Each polynomial $p_k$ has degree $n-1$
and satisfies $p_k(\omega_k) = 1$ and $p_k(\omega_j) = 0$ for all $j \neq k$.
We claim that for all numbers $\alpha \in \Q(n-1) \subseteq \C$, 
\begin{equation}\label{sumone}
\sum_{k = 0}^{n-1} p_k(\alpha ) = p_0(\alpha ) + \cdots + p_{n-1}(\alpha ) = 1
\end{equation}
and that
\begin{equation}\label{identity}
\alpha  = \sum_{k = 0}^{n-1} \omega_k p_k(\alpha ).
\end{equation}
Indeed, these identities follow from the fact that these polynomial equations
have degree $n-1$ but are satisfied by the $n$ distinct points in $\Omega_n$.

Now, given any $\omega_i^n = \omega_i$ in $\Omega_n$ it follows that $p_k(\omega_i)^2 = p_k(\omega_i)$.
Hence, for any $n$-potent $e \in A$, if we define $e_k = p_k(e)$, then each
$e_k$ is  an idempotent in $A$, and Equation (\ref{sumone}) implies that
\[
\sum_{k=0}^{n-1} e_k = \sum_{k=0}^{n-1} p_k(e) = 1.
\]
These idempotents are pairwise orthogonal, because 
\[
e_j e_k = p_j(e)p_k(e) = 0
\]
for $j \neq k$.  Finally,
\[
e = \sum_{k = 1}^{n-1} \omega_k p_k(e) = \sum_{k =1}^{n-1} \omega_k e_k
\]
by Equation (\ref{identity}).
\end{proof}

\section{$K_0$-theory with $n$-potents}

We can now proceed to construct our $n$-potent $K$-theory groups.

\begin{definition}\label{definition of M(R)}
Let $R$ be a ring.  For all $k \geq 1$, let $\P^n_k(R)$ 
denote the set of $n$-potents in $M_k(R)$, and let 
$i_k$ denote the inclusion
\[
i_k(a) = \begin{pmatrix} a & 0 \\ 0 & 0 \end{pmatrix}
\]
of $M_k(R)$ into $M_{k+1}(R)$, as well as its restriction as a map from 
$\P^n_k(R)$ to $\P^n_{k+1}(R)$.  
Define $M_\infty(R)$ and $\P^n_\infty(R)$ to be the (algebraic)
direct limits
\[
M_\infty(R) = \bigcup_{k=1}^\infty M_k(R), \quad \P^n_\infty(R) =
\bigcup_{k=1}^\infty \P^n_k(R) = \P^n(M_\infty(R)).
\]
\end{definition}

We define a binary operation $\oplus$ on $\P^n_\infty(R)$ as follows: let $e$ and 
$f$ be elements of $\P^n_\infty(R)$,  choose the smallest natural numbers
$k$ and $\ell$ such that $e \in M_k(R)$ and $f \in M_l(R)$, and set
\[
e \oplus f = \diag(e, f) = \begin{pmatrix} e & 0 \\ 0 & f \end{pmatrix} \in
\P^n_{k+l}(R) \subset \P^n_\infty(R).
\]

\begin{definition}\label{definition of V(R)}
Let $R$ be a ring, and define an equivalence relation $\sim$ on 
$\P^n_\infty(R)$ as follows:  take $e$ and $f$ in $\P^n_\infty(R)$, and choose a
natural number $k$ sufficiently large that $e$ and $f$ are elements of
$\P^n_k(R)$.
Then $e \sim f$ if $e \sim_a f$ in $M_k(R)$.  We let $\V^n(R)$ denote the set of 
equivalence classes of $\sim$.
\end{definition}

Note that if $e = ab$ and $f = ba$ in $M_k(R)$, then 
\[
\begin{pmatrix} e & 0 \\ 0 & 0 \end{pmatrix} =
\begin{pmatrix} a & 0 \\ 0 & 0 \end{pmatrix}
\begin{pmatrix} b & 0 \\ 0 & 0 \end{pmatrix}
\]
and
\[
\begin{pmatrix} f & 0 \\ 0 & 0 \end{pmatrix} =
\begin{pmatrix} b & 0 \\ 0 & 0 \end{pmatrix}
\begin{pmatrix} a & 0 \\ 0 & 0 \end{pmatrix},
\]
and therefore the equivalence relation described in Definition 
\ref{definition of V(R)} is well-defined.

Note that for any $n$-potent $e, f$ in $M_\infty(R)$, we get
\[
e = \begin{pmatrix} e & 0 \\ 0 & 0 \end{pmatrix} \perp \begin{pmatrix} 0 & 0
\\ 0 & f \end{pmatrix} \sim \begin{pmatrix} f & 0 \\ 0 & 0 \end{pmatrix} = f.
\]
Thus, the binary operation $\oplus$ induces a binary operation $+$ on
$\V^n_\infty(R)$ as follows: take $e$ and $f$ in $\P^n_\infty(R)$, and define
\[
[e] + [f] = [e \oplus f] = \left[ \begin{pmatrix} e & 0 \\ 0 & f
\end{pmatrix}\right].
\]
This operation is well-defined and commutative by Propositions
\ref{add orthogonal} and \ref{stable equivalence}.

The next proposition is straightforward and left to the reader.

\begin{proposition}\label{V abelian monoid}
For every ring $R$ and natural number $n \geq 2$, $\V^n(R)$ is an abelian
monoid under the addition defined above, and whose identity element is the
class of the zero $n$-potent.  If $\alpha : R \longrightarrow S$ is a unital
ring homomorphism, then the induced map 
$\V^n(\alpha) : \V^n(R) \longrightarrow \V^n(S)$ given by
\[
\V^n(\alpha)([(a_{ij})]) = [(\alpha(a_{ij}))]
\]
is a well-defined homomorphism of abelian semigroups. The correspondences
$R \mapsto \V^n(R)$ and $\alpha \mapsto \V^n(\alpha)$ induce a covariant functor from the category of rings and
ring homomorphisms to the category of abelian monoids and monoid 
homomorphisms.
\end{proposition}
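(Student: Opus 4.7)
The plan is to verify each piece of the proposition in turn, relying on the structural lemmas already established in Section 2. Most of the work has effectively been done; I just need to assemble it.

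First I would check that $+$ is well-defined on equivalence classes. Suppose $e \sim_a e'$ in $\P^n_k(R)$ and $f \sim_a f'$ in $\P^n_\ell(R)$. After padding with zeros we may take $k = \ell$, and then inside $M_{2k}(R)$ the block $n$-potents $e \oplus 0$ and $e' \oplus 0$ are algebraically equivalent (via the obvious $2\times 2$-block witnesses, compare the remark following Definition~\ref{definition of V(R)}), and similarly $0 \oplus f \sim_a 0 \oplus f'$. Since $e \oplus 0$ is orthogonal to $0 \oplus f$ and similarly on the primed side, Proposition~\ref{addition of algebraic} gives $e \oplus f \sim_a e' \oplus f'$. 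This shows $+$ is well-defined.

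Next I would dispatch the monoid axioms. Associativity holds on the nose: $(e \oplus f) \oplus g$ and $e \oplus (f \oplus g)$ agree as block-diagonal matrices in $\P^n_\infty(R)$. Commutativity is precisely the first half of Proposition~\ref{stable equivalence}(a), which produces an explicit algebraic equivalence between $\diag(e, f)$ and $\diag(f, e)$. For the identity element, the class $[0]$ of the zero $n$-potent works: for any $e \in \P^n_k(R)$, the element $e \oplus 0 \in \P^n_{k+1}(R)$ equals $i_k(e)$, which is identified with $e$ in the direct limit $\P^n_\infty(R)$, and hence represents $[e]$ in $\V^n(R)$.

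Finally I would establish functoriality. Given a unital ring homomorphism $\alpha : R \to S$, the entrywise map $M_k(\alpha) : M_k(R) \to M_k(S)$ is a ring homomorphism compatible with the inclusions $i_k$, so it induces a map $M_\infty(\alpha) : M_\infty(R) \to M_\infty(S)$. Since $\alpha$ (and hence $M_k(\alpha)$) preserves ring operations, it carries $n$-potents to $n$-potents, and if $e = ab$, $f = ba$ then $M_k(\alpha)(e) = M_k(\alpha)(a) M_k(\alpha)(b)$ and $M_k(\alpha)(f) = M_k(\alpha)(b) M_k(\alpha)(a)$, so algebraic equivalence is preserved. Thus the induced map $\V^n(\alpha)$ on equivalence classes is well-defined, it respects $\oplus$ because $M_k(\alpha)$ is block-diagonal preserving, and it sends $[0]$ to $[0]$. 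The identities $\V^n(\mathrm{id}_R) = \mathrm{id}_{\V^n(R)}$ and $\V^n(\beta \circ \alpha) = \V^n(\beta) \circ \V^n(\alpha)$ are immediate from the corresponding statements for $M_k(-)$.

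No step is really an obstacle here; the only subtlety is the well-definedness of $+$, which is why Proposition~\ref{addition of algebraic} was proved in advance. Everything else is formal bookkeeping with block matrices and direct limits, which is why the statement is left to the reader.
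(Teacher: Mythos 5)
Your proof is correct and follows exactly the route the paper intends: the paper leaves this proposition to the reader, pointing to Propositions \ref{add orthogonal}, \ref{addition of algebraic}, and \ref{stable equivalence}, and you assemble precisely those ingredients, correctly identifying Proposition \ref{addition of algebraic} as the key to well-definedness of $+$ on classes. The remaining monoid axioms and the functoriality of the entrywise map $M_k(\alpha)$ are handled as the standard block-matrix bookkeeping they are.
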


\begin{definition}\label{definition of K0}
Let $R$ be a ring and let $n \geq 2$ be a natural number.  We define
$K_0^n(R)$ to be the Grothendieck completion \cite{RLL} of the abelian
monoid $\V^n(R)$.  Given an $n$-potent $e$ in $\P^n_\infty(R)$, we denote
its class in $K_0^n(R)$ by $[e]$.
\end{definition}

In light of Propositions \ref{similarity implies algebraic} and
\ref{algebraic stably implies similarity}, we could have alternatively used 
similarity to define $\V^n(R)$, and hence $K_0^n(R)$. 

\begin{proposition}\label{K0 is a functor}
The assignments $R \mapsto K_0^n(R)$ determines a covariant functor
from the category of rings and ring homomorphisms to the category of abelian
groups and group homomorphisms.
\end{proposition}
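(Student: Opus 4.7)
The plan is to build the functor $K_0^n$ as the composition of two functors that have already been set up: the functor $R \mapsto \V^n(R)$ from rings to abelian monoids established in Proposition \ref{V abelian monoid}, and the Grothendieck completion functor from abelian monoids to abelian groups, which is standard (see \cite{RLL}). Since a composition of functors is a functor, this essentially reduces the claim to unpacking the two pieces.

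First I would recall the universal property of the Grothendieck completion: any monoid homomorphism $\varphi : M \to N$, followed by the canonical map $N \to G(N)$, factors uniquely through $G(M)$, yielding a group homomorphism $G(\varphi) : G(M) \to G(N)$; moreover $G(\mathrm{id}_M) = \mathrm{id}_{G(M)}$ and $G(\psi \circ \varphi) = G(\psi) \circ G(\varphi)$. Next, given a unital ring homomorphism $\alpha : R \to S$, Proposition \ref{V abelian monoid} supplies the monoid map $\V^n(\alpha) : \V^n(R) \to \V^n(S)$ sending $[(a_{ij})]$ to $[(\alpha(a_{ij}))]$, and then I would define
\[
K_0^n(\alpha) := G(\V^n(\alpha)) : K_0^n(R) \longrightarrow K_0^n(S),
\]
so that on generators $K_0^n(\alpha)([e]) = [(\alpha(e_{ij}))]$ for any $n$-potent $e = (e_{ij}) \in \P^n_\infty(R)$.

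To verify functoriality I would check the two axioms directly. For the identity, $\V^n(\mathrm{id}_R) = \mathrm{id}_{\V^n(R)}$ holds by the formula in Proposition \ref{V abelian monoid}, so $K_0^n(\mathrm{id}_R) = G(\mathrm{id}_{\V^n(R)}) = \mathrm{id}_{K_0^n(R)}$. For composition, given $\alpha : R \to S$ and $\beta : S \to T$, the entrywise description immediately gives $\V^n(\beta \circ \alpha) = \V^n(\beta) \circ \V^n(\alpha)$, and applying $G$ yields
\[
K_0^n(\beta \circ \alpha) = K_0^n(\beta) \circ K_0^n(\alpha).
\]

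There is no real obstacle here; the content sits entirely in Proposition \ref{V abelian monoid} (which handles the monoid level) and in the functoriality of the Grothendieck completion (which is proved once and for all in \cite{RLL}). The only mild subtlety is making sure the map is well-defined on representatives of $K_0^n(R)$: classes are formal differences $[e] - [f]$ with $e, f \in \P^n_\infty(R)$, and one must check that equality of two such differences (which amounts to an algebraic equivalence after adding a common $n$-potent) is preserved by applying $\alpha$ entrywise. But this is exactly what the universal property of $G$ delivers automatically from the fact that $\V^n(\alpha)$ is a monoid homomorphism, so nothing further is required.
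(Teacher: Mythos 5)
Your proposal is correct and follows exactly the paper's own argument: the paper likewise obtains $K_0^n$ as the composition of the functor $\V^n$ from Proposition \ref{V abelian monoid} with the Grothendieck completion functor. Your additional verification of the identity and composition axioms is just an unpacking of what the paper leaves implicit.
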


\begin{proof}
Proposition \ref{V abelian monoid} states that $\V$ is a covariant functor from
the category of rings to the category of abelian monoids, and Grothendieck
completion determines a covariant functor from the category of abelian monoids
to the category of abelian groups; we get the desired result by composing these
two functors.
\end{proof}

The following result shows that for (unital) algebras over a field of
characteristic $\neq 2$, the tripotent $K$-theory functor $K_0^3$ offers us no
new invariants over ordinary idempotent $K$-theory. However, we will see later
(Theorem \ref{Q(4)_theorem}) that the situation is subtly different for $K_0^4$.

\begin{theorem}\label{tripotent_thm}
Let $\F$ be a field with characteristic $\neq 2$. If $A$ is a unital algebra
over $\F$ then there is a natural isomorphism 
\[
K_0^3(A) \cong \bigl( K_0(A) \bigr)^2
\] of abelian groups.
\end{theorem}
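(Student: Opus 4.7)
The plan is to construct an explicit pair of mutually inverse natural homomorphisms using the Lagrange polynomial decomposition of Theorem~\ref{partition of n-potents}. Although that theorem is stated for $\Q(n-1)$-algebras, its proof uses the coefficient field only through the requirement that the Lagrange denominators be invertible. For $n=3$ these denominators are $\pm 2$ and $-1$, and the hypothesis $\mathrm{char}(\F) \neq 2$ makes $2$ invertible in $A$. Consequently every tripotent $e \in M_k(A)$ admits a unique decomposition $e = e_1 - e_2$, where
\[
e_1 = \frac{e^2 + e}{2}, \qquad e_2 = \frac{e^2 - e}{2}
\]
are orthogonal idempotents, and $(1 - e^2,\, e_1,\, e_2)$ is a $3$-partition of unity in $M_k(A)$.

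Define $\phi \colon K_0^3(A) \to K_0(A) \oplus K_0(A)$ on generators by $\phi([e]) = ([e_1], [e_2])$. The main task is to verify well-definedness: if $e \sim_a f$ in $\mathcal{P}^3_k(A)$, then $e_j \sim_a f_j$ as idempotents for $j = 1,2$. Choose $a,b$ satisfying the conclusions of Lemma~\ref{nice algebraic equivalence}. The identity $(ab)^j a = a(ba)^j$ gives $p(ab)\,a = a\,p(ba)$ for every polynomial $p$ with $p(0) = 0$; since $p_1(0) = p_2(0) = 0$, this produces the swap identities $e_j a = a f_j$ and $b e_j = f_j b$ for $j = 1,2$. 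Setting $a_j = \omega_j^{-1} e_j a f_j$ and $b_j = f_j b e_j$ (with $\omega_1 = 1$, $\omega_2 = -1$), a direct calculation using these swap identities together with $e e_j = \omega_j e_j$ and $f f_j = \omega_j f_j$ gives $a_j b_j = e_j$ and $b_j a_j = f_j$. Compatibility of $\phi$ with the block-sum operation $\oplus$ is immediate because the Lagrange polynomials commute with block-diagonal sums.

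The inverse $\psi \colon K_0(A) \oplus K_0(A) \to K_0^3(A)$ is defined on generators by $\psi([p], [q]) = [\diag(p, -q)]$; this is legitimate because $p^3 = p$ and $(-q)^3 = -q$. Well-definedness, compatibility with $\oplus$, and the homomorphism property follow by routine block-shuffling and Proposition~\ref{stable equivalence}. For $\phi \circ \psi = \mathrm{id}$, direct substitution yields $p_1(\diag(p,-q)) = \diag(p, 0)$ and $p_2(\diag(p, -q)) = \diag(0, q)$, so $\phi\psi([p],[q]) = ([p],[q])$. For $\psi \circ \phi = \mathrm{id}$, apply Proposition~\ref{stable equivalence}(b) to the orthogonal pair $e_1,\, -e_2$ to obtain $[\diag(e_1, -e_2)] = [e_1 + (-e_2)] = [e]$ in $K_0^3(A)$. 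Naturality in $A$ is automatic because ring homomorphisms commute with polynomial evaluation on matrices.

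The principal obstacle is verifying the well-definedness of $\phi$. Everything else reduces to routine applications of earlier results, but the appearance of the scalars $\omega_j^{-1}$ in the witnesses $a_j, b_j$ (and the factor $1/2$ in the defining polynomials) is the precise structural reason the hypothesis $\mathrm{char}(\F) \neq 2$ is necessary.
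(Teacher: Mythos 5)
Your proof is correct and takes essentially the same approach as the paper's: the identical decomposition $e = e_1 - e_2$ with $e_1 = \tfrac{1}{2}(e^2+e)$ and $e_2 = \tfrac{1}{2}(e^2-e)$, and the same pair of mutually inverse maps $[e]\mapsto([e_1],[e_2])$ and $([p],[q])\mapsto[\diag(p,-q)]$. The only difference is that you explicitly carry out the well-definedness verification (that $e\sim_a f$ implies $e_j\sim_a f_j$) which the paper dismisses as easily checked, and your witnesses $a_j, b_j$ do the job correctly.
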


\begin{proof} If $e = e^3 \in M_\infty(A)$ is a tripotent, then one can easily
check that
\[ e_1 = \frac12 (e^2 +e) \quad \text{ and } \quad e_2 = \frac12(e^2 -e) \]
are (unique) idempotents in $M_\infty(A)$ such that $e = e_1 - e_2$. It follows
that we have a natural bijection of abelain monoids
\[ \begin{aligned}
\V^3(A) & \to \V^2(A) \oplus \V^2(A)\\
[e] & \mapsto [e_1] \oplus [e_2]
\end{aligned}\]
with inverse map $[e_1] \oplus [e_2] \mapsto [e_1 \oplus -e_2]$. Since these
maps are additive, the result easily follows. 
\end{proof}

While $K_0^n(R)$ is well-defined for any ring $R$, to obtain a well-behaved
theory where the usual exact sequences exist, we must restrict our attention to
a smaller class of rings. The problem is that unlike the situation for
idempotents, it is not generally true that if $e$ is an $n$-potent, then so is
$1 - e$.  However, given an $n$-potent in an algebra over the cyclotomic field
$\Q(n-1)$, there is an adequate substitute:

\begin{definition}\label{definition of complementary n-potent}
Let $e$ be an $n$-potent in a $\Q(n-1)$-algebra $A$, and write
\[
e = \sum_{k = 1}^{n-1} \omega_k e_k
\]
as in the conclusion of Theorem \ref{partition of n-potents}.  We define an
$n$-potent
\[
e^\perp = \sum_{k = 1}^{n-1}\diag\bigl(\omega_1(1 - e_1), \omega_2(1 - e_2),
\dots,
\omega_{n-1}(1 - e_{n-1})\bigr) \in M_{n-1}(A)
\]
and call $e^\perp$ the \emph{complementary $n$-potent} of $e$.
\end{definition}

Observe that if $n=2$, this definition agrees with the usual one for
idempotents; \ie $e^\perp = 1 - e$. Note also that $e \oplus e^\perp \sim_s
\omega$, where
\[
\omega = \diag(\omega_1 1_A, \dots, \omega_n 1_A) \in M_{n-1}(\Q(n-1)) \subseteq
M_{n-1}(A).
\] 

\begin{proposition}[Standard Picture of $K_0^n(A)$]
\label{standard picture of K0}
Let $n \geq 2$ be a natural number and let $A$ be a $\Q(n-1)$-algebra. Then
every element of $K_0^n(A)$ can be written in the form $[e] - [\omega]$, where
$e$ in an
$n$-potent in $M_k(A)$ for some natural number $k$ and $\omega$ is a diagonal 
$n$-potent in $M_k(\Q(n-1))$.
\end{proposition}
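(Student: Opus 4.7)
The plan is to use the definition of $K_0^n(A)$ as a Grothendieck completion together with the complementary $n$-potent construction of Definition \ref{definition of complementary n-potent}. Since the Grothendieck group of an abelian monoid has the property that every element is a formal difference of elements of the monoid, every class in $K_0^n(A)$ has the form $[e] - [f]$ for some pair of $n$-potents $e, f \in \P^n_\infty(A)$; after padding with zero blocks (which is trivial in $\V^n(A)$) we may assume both $e$ and $f$ live in $M_m(A)$ for a common $m$.

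Next, I would apply Theorem \ref{partition of n-potents} to decompose $f$ against its $n$-partition of unity and then form the complementary $n$-potent $f^\perp$. The observation recorded immediately before this proposition states that $f \oplus f^\perp \sim_s \omega_f$, where $\omega_f$ is a diagonal $n$-potent with entries drawn from the roots $\omega_1, \ldots, \omega_{n-1}$. Combined with Proposition \ref{similarity implies algebraic}, this identifies $[f \oplus f^\perp] = [\omega_f]$ in $\V^n(A)$, and hence in $K_0^n(A)$. The rest is formal manipulation inside the Grothendieck group:
\[
[e] - [f] = \bigl([e] + [f^\perp]\bigr) - \bigl([f] + [f^\perp]\bigr) = [e \oplus f^\perp] - [\omega_f].
\]

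If $e \oplus f^\perp$ and $\omega_f$ lie in matrix algebras of different sizes, I would pad the smaller one with a zero block; because $0 = \omega_0 \in \Omega_n \subseteq \Q(n-1)$, the padded $\omega_f$ remains a diagonal $n$-potent over $\Q(n-1)$, and adjoining a zero block to an $n$-potent leaves its class in $K_0^n(A)$ unchanged. The only mildly subtle point is verifying that the diagonal entries of $\omega_f$ genuinely lie in $\Q(n-1)$ rather than some larger field; this is automatic, since each entry is one of the roots in $\Omega_n$ and $\Omega_n \subseteq \Q(n-1)$ by the very definition $\Q(n-1) = \Q[\zeta_{n-1}]$. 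Beyond this and the tracking of matrix sizes, no serious obstacle arises: all of the substantive work has already been done in Theorem \ref{partition of n-potents} and the definition of $e^\perp$.
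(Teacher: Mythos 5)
Your proposal is correct and follows essentially the same route as the paper: write a class as $[e]-[f]$, add and subtract $[f^\perp]$, and identify $[f\oplus f^\perp]$ with the class of a diagonal $n$-potent over $\Q(n-1)$, padding with zero blocks to equalize matrix sizes. Your explicit appeal to the similarity $f\oplus f^\perp\sim_s\omega_f$ together with Proposition \ref{similarity implies algebraic} is just a slightly more careful rendering of the paper's one-line assertion that $[\tilde f]+[\tilde f^\perp]=[\omega]$.
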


\begin{proof}
Start with an element $[\tilde{e}] - [\tilde{f}]$ in $K_0^n(A)$, and take
$\tilde{f}^\perp$ to be the complementary $n$-potent of $f$ as defined in
Definition \ref{definition of complementary n-potent}.  Then 
\[
[\tilde{e}] - [\tilde{f}] = \bigl([\tilde{e}] + [\tilde{f}^\perp]\bigr) 
- \bigl([\tilde{f}] + [\tilde{f}^\perp]\bigr).
\]
The $n$-potents $\tilde{f}$ and $\tilde{f}^\perp$ are orthogonal, and therefore
\[
[\tilde{f}] + [\tilde{f}^\perp] = [\tilde{f} + \tilde{f}^\perp] = [\omega],
\]
where $\omega$ has the desired form.  Finally we take 
$e$ to be $\tilde{e} \oplus \tilde{f}^\perp$, and by enlarging the matrix
$\omega$, we obtain the desired result.
\end{proof}

\begin{proposition}\label{add trivial}
Let $n \geq 2$ and let $A$ be a $\Q(n-1)$-algebra. Suppose $e$ and $f$ are
$n$-potents in $M_\infty(A)$.  Then $[e] = [f]$ in $K_0^n(A)$ if and only if $e
\oplus \omega$ is similar to $f \oplus \omega$ for some $n$-potent 
$\omega$ in $M_\infty(\Q(n-1))$.
\end{proposition}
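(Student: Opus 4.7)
The \emph{if} direction is immediate: similarity implies algebraic equivalence (Proposition \ref{similarity implies algebraic}), so $e \oplus \omega \sim_s f \oplus \omega$ yields $[e \oplus \omega] = [f \oplus \omega]$ in $\V^n(A)$, and cancelling $[\omega]$ in the Grothendieck group $K_0^n(A)$ gives $[e] = [f]$.

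For the \emph{only if} direction, the plan is as follows. By construction of the Grothendieck completion, $[e] = [f]$ in $K_0^n(A)$ means there exists a stabilizing $n$-potent $g \in \P^n_\infty(A)$ with $e \oplus g \sim_a f \oplus g$ in some $M_k(A)$.  The catch is that $g$ need not have entries in $\Q(n-1)$, so the idea is to replace it by the orthogonal sum $g \oplus g^\perp$: by the remark following Definition \ref{definition of complementary n-potent}, this sum is similar to a diagonal $n$-potent $\omega'$ whose entries lie in $\Omega_n \subseteq \Q(n-1)$.

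The execution has three steps. First, I apply Proposition \ref{addition of algebraic} to the algebraically equivalent pair $(e \oplus g,\, f \oplus g)$ together with the trivially algebraically equivalent pair $(g^\perp,\, g^\perp)$, viewed as sitting in disjoint (hence orthogonal) coordinate blocks of a common larger matrix algebra, to conclude $e \oplus g \oplus g^\perp \sim_a f \oplus g \oplus g^\perp$.  Second, Proposition \ref{algebraic stably implies similarity} upgrades this to a similarity after adjoining a zero pad, $e \oplus g \oplus g^\perp \oplus 0 \sim_s f \oplus g \oplus g^\perp \oplus 0$. Third, letting $w$ be an invertible implementing $g \oplus g^\perp \sim_s \omega'$ and conjugating by $1 \oplus w \oplus 1$ (which leaves the $e$ and $f$ summands fixed), I rewrite both sides to obtain $e \oplus \omega \sim_s f \oplus \omega$, with $\omega = \omega' \oplus 0$ a diagonal $n$-potent in $M_\infty(\Q(n-1))$.

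The work here is bookkeeping rather than any new idea: one must track matrix sizes carefully and verify the orthogonality hypothesis of Proposition \ref{addition of algebraic}, which is automatic once "$\oplus$" is interpreted as placement in disjoint diagonal blocks of a larger matrix algebra.  The one genuinely useful structural fact is that the complementary $n$-potent completes an arbitrary $g$ to something similar to a $\Q(n-1)$-valued diagonal, which is exactly what makes the argument go through and explains why the hypothesis that $A$ is a $\Q(n-1)$-algebra is essential.
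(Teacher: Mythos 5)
Your proposal is correct and follows essentially the same route as the paper: extract a stabilizing $n$-potent $g$ from the Grothendieck relation, adjoin its complementary $n$-potent $g^\perp$ so that $g \oplus g^\perp$ becomes similar to a scalar diagonal $n$-potent over $\Q(n-1)$, and transport the equivalence accordingly. Your version merely makes explicit the bookkeeping (the passage from stable algebraic equivalence to similarity via Proposition \ref{algebraic stably implies similarity} and the conjugation by $1 \oplus w \oplus 1$) that the paper leaves implicit.
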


\begin{proof}
The \lq\lq only if\rq\rq\  direction is obvious.  To show the inference in the
opposite direction, suppose that $[e] = [f]$ in $K_0^n(A)$.  By the definition
of the Grothendieck completion, $e \oplus \tilde{e}$ is similar to $f \oplus
\tilde{e}$ for some $n$-potent $\tilde{e}$ in $M_\infty(A)$. 
Then $e \oplus \tilde{e} \oplus \tilde{e}^\perp$ is similar to
$f \oplus \tilde{e} \oplus \tilde{e}^\perp$.
But if we write $\tilde{e} = \sum_{k=1}^{n-1}\omega_k\tilde{e}_k$  as in Theorem
\ref{partition of n-potents}, then Proposition \ref{stable equivalence}(b)
implies that
\[
\tilde{e} \sim_s \diag\bigl(\omega_1\tilde{e}_1, \omega_2\tilde{e}_2, \dots, 
\omega_{n-1}\tilde{e}_{n-1}\bigr).
\]
Therefore $\tilde{e} \oplus \tilde{e}^\perp$ is similar to an $n$-potent
in $M_\infty(\Q(n-1))$, and the proposition follows.
\end{proof}

We next turn our attention to $n$-potent $K$-theory for nonunital
algebras.  Given a nonunital $\Q(n-1)$-algebra $A$, we define its
\emph{unitization} $A^+$ as the unital $\Q(n-1)$-algebra
$A^+ = \{(a, \lambda): a \in A, \lambda \in \Q(n-1) \}$, where addition and
scalar multiplication are defined componentwise, and multiplication is given by
$(a, \lambda)(b, \tau) = (ab + a\tau + b\lambda, \lambda\tau)$.  

\begin{definition}\label{definition of nonunital K0}
Let $A$ be a nonunital $\Q(n-1)$-algebra, and let $A^+$ be its unitization.  Let 
$\pi: A^+ \longrightarrow \Q(n-1)$ be the algebra homomorphism 
$\pi(a, \lambda) = \lambda$.  Then we define $K_0^n(A) = \ker\pi_*$.
\end{definition}

It is easy to see that $\pi_*$ is surjective, so by definition of $K_0^n(A)$ we
have a
short exact sequence
\[
\xymatrix{
0 \ar[r] &  K_0^n(A) \ar[r] &  K_0^n(A^+) \ar[r]^-{\pi_*} &   K_0^n(\Q(n-1)) 
\ar[r] \ar@/^/[l]^-{\psi_*} & 0
}
\]
with splitting induced by the map $\psi: \Q(n-1) \longrightarrow A^+$ defined by
$\psi(\lambda) = (0, \lambda)$.  In addition, it is easy to check that if $A$
already has a unit and we form $A^+$, then $\ker \pi_*$ is naturally isomorphic
to our original definition of $K_0^n(A)$.

\begin{proposition}\label{standard picture of nonunital K0}
Let $A$ be a nonunital $\Q(n-1)$-algebra.  Then every element of 
$K_0^n(A)$ can be written in the form $[e] - [s(e)]$, where $e$ is an
$n$-potents in $M_k(A^+)$ for some integer $k \geq 1$, and $s = \psi \circ \pi 
: A^+ \to A^+$ is
the scalar mapping \cite[Sect. 4.2.1]{RLL}.
\end{proposition}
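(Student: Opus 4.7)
The plan is to deduce this from the unital standard picture (Proposition \ref{standard picture of K0}) applied to $A^+$, together with the fact that $\pi \circ \psi = \mathrm{id}_{\Q(n-1)}$.

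First, given $x \in K_0^n(A) = \ker \pi_*$, I would apply the standard picture of $K_0^n(A^+)$ to express $x = [e] - [\omega]$, where $e$ is an $n$-potent in $M_k(A^+)$ for some $k \geq 1$ and $\omega$ is a diagonal $n$-potent in $M_k(\Q(n-1))$. By increasing $k$ if necessary we may view $\omega$ simultaneously as an element of $M_k(\Q(n-1))$ and, via $\psi$, as an $n$-potent in $M_k(A^+)$; since $\pi$ is the identity on scalars, $\pi(\omega) = \omega$.

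Next, I would exploit $\pi_*(x) = 0$. Applying $\pi_*$ to $x = [e] - [\omega]$ gives
\[
0 = \pi_*(x) = [\pi(e)] - [\pi(\omega)] = [\pi(e)] - [\omega]
\]
in $K_0^n(\Q(n-1))$, so $[\pi(e)] = [\omega]$ there. Now I push this equality back into $K_0^n(A^+)$ via the split $\psi_*$: using functoriality (Proposition \ref{K0 is a functor}) and the definition $s = \psi \circ \pi$,
\[
[s(e)] = [\psi(\pi(e))] = \psi_*[\pi(e)] = \psi_*[\omega] = [\omega]
\]
in $K_0^n(A^+)$, where the last equality holds because $\psi(\omega) = \omega$ under the identification above. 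Substituting, $x = [e] - [\omega] = [e] - [s(e)]$, which is the required form.

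There is no real obstacle here; the only subtlety is bookkeeping the two roles played by $\omega$ (as a matrix over $\Q(n-1)$ inside $A^+$ via $\psi$, and as the image of $\pi(e)$). The argument would be cleaner if written after noting once that under the splitting $A^+ = A \oplus \psi(\Q(n-1))$, both $\omega$ and $s(e)$ lie entirely in the scalar summand and have the same class in $K_0^n(A^+)$ precisely when their images $\pi(\omega) = \omega$ and $\pi(s(e)) = \pi(e)$ agree in $K_0^n(\Q(n-1))$, which is exactly the content of $\pi_*(x) = 0$.
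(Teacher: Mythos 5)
Your proposal is correct and follows essentially the same route as the paper, whose proof is simply the one-line remark that the claim ``follows directly'' from Proposition \ref{standard picture of K0} and Definition \ref{definition of nonunital K0}; your writeup is the natural elaboration of that, correctly using $\pi\circ\psi=\mathrm{id}$ and $\pi_*(x)=0$ to replace $[\omega]$ by $[s(e)]=\psi_*[\pi(e)]$ in $K_0^n(A^+)$.
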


\begin{proof}
Follows directly from Proposition \ref{standard picture of K0} and
Definition \ref{definition of nonunital K0}.
\end{proof}

\begin{proposition}[Half-exactness]\label{half-exactness}
Every short exact sequence 
\[
\xymatrix{
0 \ar[r] & I \ar[r]^-{i} & A \ar[r]^-{q} & A/I \ar[r] & 0 }
\]
of $\Q(n-1)$-algebras, with $A$ unital,  induces an exact sequence
\[
\xymatrix{
K_0^n(I) \ar[r]^-{i_*} & K_0^n(A) \ar[r]^-{q_*} & K_0^n(A\slash I)}
\]
of abelian $n$-potent $K$-theory groups.
\end{proposition}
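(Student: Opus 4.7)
The containment $i_*(K_0^n(I)) \subseteq \ker(q_*)$ is immediate from $q \circ i = 0$, so my work is the reverse inclusion.  Given $x \in \ker(q_*)$, the plan is to first put $x$ in the standard picture of Proposition \ref{standard picture of K0}, writing $x = [e] - [\omega]$ with $e$ an $n$-potent in $M_k(A)$ and $\omega$ a diagonal $n$-potent in $M_k(\Q(n-1))$.  Applying Proposition \ref{add trivial} inside $K_0^n(A/I)$ to the relation $[q(e)] = [\omega]$ produces an $n$-potent $\omega' \in M_\ell(\Q(n-1))$ and an invertible $z \in M_N(A/I)$, with $N = k + \ell$, such that $z\, q(e \oplus \omega')\, z^{-1} = \omega \oplus \omega'$.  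Replacing $e$ by $e \oplus \omega'$ and $\omega$ by $\omega \oplus \omega'$ does not alter $x$, so I may assume from the outset that $z\, q(e)\, z^{-1} = \omega$ in $M_N(A/I)$.

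The central step is lifting this similarity to $M_{2N}(A)$ without recourse to any topology.  I will invoke the classical Whitehead-style factorization
\[
\begin{pmatrix} z & 0 \\ 0 & z^{-1} \end{pmatrix} =
\begin{pmatrix} 1 & z \\ 0 & 1 \end{pmatrix}
\begin{pmatrix} 1 & 0 \\ -z^{-1} & 1 \end{pmatrix}
\begin{pmatrix} 1 & z \\ 0 & 1 \end{pmatrix}
\begin{pmatrix} 0 & -1 \\ 1 & 0 \end{pmatrix}
\]
in $M_{2N}(A/I)$, choose arbitrary lifts $\tilde z, \tilde w \in M_N(A)$ of $z$ and $z^{-1}$ (not required to be mutual inverses), and form the analogous product $Z \in M_{2N}(A)$ built from $\tilde z$ and $\tilde w$.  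Each of the four factors is manifestly invertible over $A$, so $Z$ is invertible with $q(Z) = \diag(z, z^{-1})$.  Setting $f := Z(e \oplus 0)Z^{-1}$ produces an $n$-potent in $M_{2N}(A)$ similar to $e \oplus 0$ and satisfying $q(f) = \omega \oplus 0$; in particular $[f] - [\omega \oplus 0] = [e] - [\omega] = x$ in $K_0^n(A)$.

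It remains to realize $[f] - [\omega_0]$, where $\omega_0 := \omega \oplus 0$, as $i_*$ of a class in $K_0^n(I)$.  Since $q(f - \omega_0) = 0$, the difference $g := f - \omega_0$ lies in $M_{2N}(I)$; using the canonical decomposition $I^+ = I \oplus \Q(n-1)$, I define $\tilde f := (g, \omega_0) \in M_{2N}(I^+)$, which maps to $f$ under the extension $\iota : I^+ \to A$ of the inclusion.  The one verification requiring care is that $\tilde f$ is actually an $n$-potent in $M_{2N}(I^+)$: its $\Q(n-1)$-component satisfies $\omega_0^n - \omega_0 = 0$ directly, while its $I$-component is sent to $f^n - f = 0$ under $\iota$ and so must vanish by injectivity of $I \hookrightarrow A$.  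With $s(\tilde f) = \omega_0$, Proposition \ref{standard picture of nonunital K0} places $[\tilde f] - [s(\tilde f)]$ in $K_0^n(I)$, and $i_*$ carries it to $[f] - [\omega_0] = x$.  The principal obstacle throughout is the similarity-lifting in the middle paragraph: it is precisely this absence of an analytic structure that forces the Whitehead trick and the passage to the doubled matrix algebra.
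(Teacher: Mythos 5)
Your argument is correct and follows essentially the same route as the paper's proof: reduce to the standard picture, use Proposition \ref{add trivial} to get a similarity $z\,q(e)\,z^{-1}=\omega$ after absorbing a scalar $n$-potent, lift $\diag(z,z^{-1})$ to an invertible over $A$, and conjugate $e$ into $M_\bullet(I^+)$. The only differences are cosmetic: you write out the Whitehead factorization explicitly where the paper cites Blackadar (Prop.\ 3.4.2 and Cor.\ 3.4.4), and you are somewhat more careful than the paper in verifying that $(f-\omega_0,\omega_0)$ is an $n$-potent of $M_{2N}(I^+)$ whose class maps to $[e]-[\omega]$ under $i_*$.
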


\begin{proof}
Since $q \circ i = 0$, we have by functoriality that $q_* \circ i_* = 0$
and so the image of $K_0^n(I)$ under $i_*$ in $K_0^n(A)$ is contained in the
kernel of 
$q_*$.  To show the reverse inclusion, suppose we have 
$[e] - [\lambda]$ in $K_0^n(A)$ such  that $q_*\bigl([e] - [\lambda]\bigr) = 0$.
Then $[q(e)] = [q(\lambda)] = [\lambda]$ in $K_0^n(A\slash I)$.
By Proposition \ref{add trivial}, there exists an $n$-potent $\tau$ in 
$M_\infty(\Q(n-1))$ so that
\[
q(e) \oplus \tau \sim_s \lambda \oplus \tau.
\]
Choose $N$ sufficiently large so that we may view $e$, $\lambda$, and $\tau$ as
$N$ by $N$ matrices, and choose $z$ in $GL_{2N}(A\slash I)$ so that
\[
z\bigl( q(e) \oplus \tau\bigr)z^{-1} = \lambda \oplus \tau.
\]
By Proposition 3.4.2 and Corollary 3.4.4 in \cite{Bla98}, we can lift 
$\diag(z, z^{-1})$ to an element $u$ in $GL_{4N}(A)$.  Set
$f = u(e \oplus \tau)u^{-1}$.  Then
\[
q(f) = \diag(z, z^{-1})(q(e) \oplus \tau)\diag(z^{-1}, z)
= \lambda \oplus \tau,
\]
and thus $f$ and $\lambda \oplus \tau$ are in $M_{4N}(I^+)$.  Therefore
\[
[e] - [\lambda] = [e \oplus \tau] - [\lambda \oplus \tau] 
=  i_*([f] - [\lambda \oplus \tau])
\]
is in the image of $K_0^n(I)$ under $i_*$ as desired.
\end{proof}

Note that our proof of Proposition \ref{half-exactness} relies critically on Proposition 
\ref{add trivial}, which in turn is proved using the standard picture of $K_0^n(A)$.
We do not have a standard picture for $K_0^k(A)$ when $k \neq n$, and it seems likely to the
authors that $K_0^k$ is, in fact, not half-exact in this case.  However, 
we do not have a counterexample where half-exactness fails to hold.

While it is not at all obvious from its definition, $K_0^n(A)$ can be identified
with a more familiar object.

\begin{theorem}\label{iso}
Let $n \geq 2$ be a natural number and let $A$ be a not necessarily unital
$\Q(n-1)$-algebra.  Then there is a natural isomorphism
\[
K_0^n(A) \cong \bigl( K_0(A)\bigr)^{n-1}
\] of abelian groups.
\end{theorem}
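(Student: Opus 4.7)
The plan is to use the partition-of-unity decomposition from Theorem \ref{partition of n-potents} to build a natural isomorphism of monoids $\V^n(A) \cong \bigl(\V^2(A)\bigr)^{n-1}$, and then pass to Grothendieck completions and to the non-unital case by naturality.

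First, assume $A$ is unital. Given an $n$-potent $e \in \P^n_k(A)$, Theorem \ref{partition of n-potents} provides unique idempotents $e_0, e_1, \dots, e_{n-1} \in M_k(A)$ with $e_k = p_k(e)$ (the Lagrange polynomial evaluated at $e$) such that $e = \sum_{k=1}^{n-1} \omega_k e_k$. Define
\[
\Phi : \V^n(A) \longrightarrow \bigl(\V^2(A)\bigr)^{n-1}, \qquad [e] \longmapsto \bigl([e_1], [e_2], \dots, [e_{n-1}]\bigr).
\]
To see that $\Phi$ is well-defined, I need to check that algebraic equivalence of $n$-potents induces algebraic equivalence of each component. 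Suppose $e = ab$ and $f = ba$. Since $p_k(0) = 0$ for $k \geq 1$, we may write $p_k(x) = x q_k(x)$ for some polynomial $q_k \in \Q(n-1)[x]$. A direct computation then gives
\[
p_k(ab) = \bigl(a \, q_k(ba)\bigr) \cdot b \quad\text{and}\quad p_k(ba) = b \cdot \bigl(a \, q_k(ba)\bigr),
\]
so $e_k = p_k(e) \sim_a p_k(f) = f_k$ via the pair $\bigl(a q_k(ba), b\bigr)$. Additivity of $\Phi$ is immediate: if $e = \sum \omega_k e_k$ and $f = \sum \omega_k f_k$, then $e \oplus f = \sum \omega_k (e_k \oplus f_k)$ is the partition-of-unity decomposition of $e \oplus f$.

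Next I construct the inverse. Given idempotents $p_1, \dots, p_{n-1}$ in some $M_N(A)$, define
\[
\Psi\bigl([p_1], \dots, [p_{n-1}]\bigr) = \bigl[\diag(\omega_1 p_1, \omega_2 p_2, \dots, \omega_{n-1} p_{n-1})\bigr] \in \V^n(A).
\]
The diagonal matrix on the right is an $n$-potent because each block $\omega_j p_j$ satisfies $(\omega_j p_j)^n = \omega_j p_j$ (since $\omega_j^{n-1}=1$), and the blocks are pairwise orthogonal (Proposition \ref{add orthogonal}). To verify $\Phi \circ \Psi = \mathrm{id}$, observe that since $p_j^2 = p_j$, one has $p_k(\omega_j p_j) = p_k(\omega_j)\, p_j = \delta_{jk}\, p_j$ for $k \geq 1$, so the $k$-th partition component of the block-diagonal $n$-potent is $\diag(0,\dots,p_k,\dots,0) \sim_a p_k$. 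For $\Psi \circ \Phi = \mathrm{id}$, take $e = \sum \omega_k e_k$ with pairwise orthogonal idempotents $e_k$. Then the $\omega_k e_k$ are pairwise orthogonal $n$-potents, and iterating Proposition \ref{stable equivalence}(b) gives
\[
\diag(\omega_1 e_1, \omega_2 e_2, \dots, \omega_{n-1} e_{n-1}) \sim_a \diag\bigl(\textstyle\sum_k \omega_k e_k, 0, \dots, 0\bigr) = \diag(e, 0, \dots, 0) \sim e.
\]

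It remains to pass to Grothendieck completions and handle the non-unital case. Since $\Phi$ is an isomorphism of abelian monoids, it induces the isomorphism $K_0^n(A) \cong \bigl(K_0(A)\bigr)^{n-1}$ for unital $A$. The construction of $\Phi$ is functorial: any algebra homomorphism $\alpha : A \to B$ commutes with polynomial evaluation, so $\Phi$ is natural. For a non-unital $\Q(n-1)$-algebra $A$, applying the unital isomorphism to the split short exact sequence associated to $\pi : A^+ \to \Q(n-1)$ and restricting to kernels yields $K_0^n(A) = \ker \pi_* \cong \ker \pi_*^{\oplus (n-1)} = \bigl(K_0(A)\bigr)^{n-1}$. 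The main technical point is the algebraic-equivalence calculation in the well-definedness of $\Phi$, where one must rewrite $p_k(ab)$ as $\tilde a \tilde b$ and $p_k(ba)$ as $\tilde b \tilde a$ for a common pair; every other step reduces to Theorem \ref{partition of n-potents} and Proposition \ref{stable equivalence}.
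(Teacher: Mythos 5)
Your proposal is correct and follows essentially the same route as the paper: the partition-of-unity decomposition of Theorem \ref{partition of n-potents} gives mutually inverse monoid maps between $\V^n(A)$ and $\bigl(\V^2(A)\bigr)^{n-1}$, which one then Grothendieck-completes and extends to the non-unital case via the split sequence for $A^+$. Your explicit verification that algebraic equivalence is preserved componentwise, via $p_k(x)=x\,q_k(x)$ and the pair $\bigl(a\,q_k(ba),\,b\bigr)$, nicely fills in a step the paper only asserts.
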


\begin{proof}
First consider the case where $A$ is unital.   We define a homomorphism 
$\tilde{\psi}: \mathcal{V}^n(A) \longrightarrow
\bigl(\mathcal{V}_0(A)\bigr)^{n-1}$ in the following way:
for each $n$-potent $e = \sum_{k=1}^{n-1}\omega_k e_k$ in $M_\infty(A)$, set
\[
\tilde{\psi} [e] = \bigl([e_1], [e_2], \dots, [e_{n-1}]\bigr).
\]
It is easy to check that $\tilde{\psi}$ is additive and well-defined.
Next, define a homomorphism
$\tilde{\phi}: \bigl(\mathcal{V}(A)\bigr)^{n-1} \longrightarrow
\mathcal{V}^n(A)$ 
by the formula
\begin{multline*}
\tilde{\phi}\bigl([f_1], [f_2], \dots, [f_{n-1}]\bigr) = \\
\bigl[ \omega_1\diag(f_1, 0, 0, \dots, 0) + 
 \omega_2\diag(0, f_2, 0, \dots, 0) + \cdots \\
  + \omega_{n-1}\diag(0, 0, \dots, 0, f_{n-1})\bigr].
\end{multline*}
Note that
\begin{align*}
&\tilde{\psi}\tilde{\phi}\bigl( [f_1], [f_2], \dots, [f_{n-1}]\bigr)\\
&{\hskip 24pt} = 
\psi\bigl[ \omega_1\diag(f_1, 0, \dots, 0) + 
\cdots + \omega_{n-1}\diag(0, 0, \dots, f_{n-1})\bigr] \\
&{\hskip 24pt}= \bigl( [\diag(f_1, 0, \dots, 0)], [\diag(0, f_2, \dots, 0)]
\dots
 [\diag(0,0, \dots, f_{n-1})]\bigr) \\
&{\hskip 24pt}= \bigl( [f_1], [f_2], \dots, [f_{n-1}]\bigr)
\end{align*}
and 
\begin{align*}
\tilde{\phi}\tilde{\psi} [e] &=
\phi\bigl( [e_1], [e_2], \dots, [e_{n-1}]\bigr) \\
&= \bigl[ \omega_1\diag(e_1, 0, \dots, 0) 
+ \cdots + \omega_{n-1}\diag(0, 0, \dots, e_{n-1})\bigr] \\
&= \bigl[\diag(\omega_1 e_1, \omega_2 e_2 , \dots, \omega_{n-1} e_{n-1})\bigr]
\\
& = [e],
\end{align*}
where the last equality is a consequence of Proposition \ref{stable
equivalence}(b).
The universal mapping property of the Grothendieck completion implies that
$\tilde{\psi}$  extends uniquely to an abelian group isomorphism
\[ \psi: K^n_0(A) \longrightarrow \bigl(K_0(A)\bigr)^{n-1},\] and thus
the theorem is true for unital $\Q(n-1)$-algebras. 

Now suppose that $A$ does not have a unit.  Then we have the following
commutative diagram with exact rows:
\[
\xymatrix{
0 \ar[r] & K_0^n(A) \ar[r] & K_0^n(A^+) \ar[r] \ar[d]^-{\cong} & K_0^n(\Q(n-1))
\ar[r] \ar[d]^-{\cong} & 0\\
0 \ar[r] & K_0(A)^{n-1} \ar[r] & K_0(A^+)^{n-1} \ar[r] & K_0(\Q(n-1))^{n-1}
\ar[r] & 0}
\]

An easy diagram chase shows that there is a unique group isomorphism
from $K_0^n(A)$ to $\bigl( K_0(A)\bigr)^{n-1}$ that makes 
the diagram commute.
\end{proof}

Since a complex algebra is a $\Q(n-1)$-algebra for all values of $n$, we have
the following immediate corollary.

\begin{corollary}\label{complex_isom} If $A$ is a $\C$-algebra, there are
natural isomorphisms
\[K_0^n(A) \cong\bigl(K_0(A)\bigr)^{n-1}\] of abelian groups for all natural
numbers $n \geq 2$.
\end{corollary}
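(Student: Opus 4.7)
The plan is to derive this corollary as an immediate consequence of Theorem \ref{iso}. The work of the corollary is therefore not in any new calculation, but in verifying that the hypothesis of Theorem \ref{iso} applies for every $n \geq 2$ when $A$ is a $\C$-algebra. Concretely, I would argue that for each fixed $n \geq 2$, $A$ inherits a canonical structure of a $\Q(n-1)$-algebra, and then quote Theorem \ref{iso} to produce the desired isomorphism $K_0^n(A) \cong \bigl(K_0(A)\bigr)^{n-1}$.

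For the first step, I would point to the observation already made in the subsection on cyclotomic algebras: since $\zeta_{n-1} = e^{2\pi i/(n-1)} \in \C$, the cyclotomic field $\Q(n-1) = \Q[\zeta_{n-1}]$ is a subfield of $\C$. The $\C$-algebra structure on $A$ therefore restricts to a $\Q(n-1)$-algebra structure via the inclusion $\Q(n-1) \hookrightarrow \C$. This works simultaneously (but separately) for every $n \geq 2$; there is no need for a single compatible structure.

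With the hypothesis in hand, Theorem \ref{iso} produces, for each $n \geq 2$, an isomorphism $K_0^n(A) \cong \bigl(K_0(A)\bigr)^{n-1}$ of abelian groups, and the naturality statement in the corollary is inherited directly from the naturality asserted in Theorem \ref{iso}, since a $\C$-algebra homomorphism is in particular a $\Q(n-1)$-algebra homomorphism. I do not anticipate any genuine obstacle here; the only thing to guard against is a notational slip, namely conflating the different $\Q(n-1)$-structures for different values of $n$. Since the isomorphism in Theorem \ref{iso} is stated for a fixed $n$, one simply applies it one $n$ at a time, so this is not a real issue.
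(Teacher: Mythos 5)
Your proposal is correct and matches the paper's argument exactly: the corollary is deduced by noting that $\Q(n-1) \subseteq \C$ makes every $\C$-algebra a $\Q(n-1)$-algebra for each $n \geq 2$, and then applying Theorem \ref{iso}. No further comment is needed.
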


We now arrive at the result that suggests why we should consider all
$K_0^n$-functors for algebras over a cyclotomic field.

\begin{theorem}\label{Q(4)_theorem}
Let $\Q(4) = \Q[i]$ be the $4$th cyclotomic field. Then we have the following
isomorphisms of abelian groups:
\[
\begin{aligned}
K_0^2(\Q(4)) & \cong \Z, \\
K_0^3(\Q(4)) & \cong \Z^2, \\
K_0^4(\Q(4)) & \cong \Z \oplus 2 \Z, \\
K_0^5(\Q(4)) & \cong \Z^4.
\end{aligned}
\]
Thus, $K_0^4(\Q(4)) \not \cong \Z^3 \cong K_0^4(\Q(3))$.
\end{theorem}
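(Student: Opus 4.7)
The first three isomorphisms follow immediately from Theorem \ref{iso} once I check that $\Q(4) = \Q[i]$ is a $\Q(n-1)$-algebra in each case. Indeed $\Q(1) = \Q(2) = \Q$ sits inside $\Q(4)$ (handling $n = 2, 3$), and $\Q(4) \subseteq \Q(4)$ trivially (handling $n = 5$), so Theorem \ref{iso} combined with $K_0(\Q(4)) = \Z$ yields $K_0^n(\Q(4)) \cong \Z^{n-1}$ for $n \in \{2, 3, 5\}$.

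The case $n = 4$ is the substantive one: $\Q(3) = \Q[\omega]$ with $\omega$ a primitive cube root of unity is a quadratic extension of $\Q$ distinct from $\Q(4) = \Q[i]$, so $\Q(3) \not\subseteq \Q(4)$ and Theorem \ref{iso} does not apply. I propose to classify 4-potents in $M_k(\Q(4))$ directly via rational canonical form over the field $\Q(4)$. The separable polynomial $x^4 - x = x(x-1)(x^2+x+1)$ has $x^2 + x + 1$ irreducible over $\Q(4)$ (its roots $\omega, \omega^2$ lie in $\Q(3)$, not in $\Q(4)$), so every 4-potent $e \in M_k(\Q(4))$ is semisimple with rational canonical form
\[
0_{d_0} \oplus I_{d_1} \oplus C^{\oplus d_2},
\]
where $C$ is the $2 \times 2$ companion matrix of $x^2+x+1$ and the triple $(d_0, d_1, d_2) \in \N^3$ is a complete similarity invariant. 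Stabilization absorbs the $d_0$-coordinate, so $\V^4(\Q(4))$ becomes the free abelian monoid on $[1]$ and $[C]$ (invoking Proposition \ref{algebraic stably implies similarity} to pass between algebraic equivalence and similarity), and Grothendieck completion delivers $\Z \oplus \Z$. The notation $\Z \oplus 2\Z$ records the natural embedding into $K_0^4(\C) \cong \Z^3$ from Corollary \ref{complex_isom}: under scalar extension $[1] \mapsto (1, 0, 0)$ while $[C] \mapsto (0, 1, 1)$, since $C$ has complex eigenvalues $\omega$ and $\omega^2$, forcing the $\omega$- and $\omega^2$-coordinates of any class in the image to coincide and thus producing a factor of $2$ after collapsing them.

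The main obstacle is the uniqueness half of rational canonical form over $\Q(4)$: I must argue that the invariant $d_2$ survives stabilization, i.e., that $[C]$ cannot be absorbed into any sum of scalar blocks in $M_N(\Q(4))$. This is precisely what uniqueness of elementary divisors guarantees, since the irreducible factor $x^2 + x + 1$ has no common root with $x$ or $x-1$ in any extension of $\Q(4)$, so the $d_2$-summand is canonically isolated under any algebra homomorphism. Finally, for the concluding non-isomorphism, $\Q(3)$ is trivially a $\Q(3)$-algebra, so Theorem \ref{iso} gives $K_0^4(\Q(3)) \cong K_0(\Q(3))^3 = \Z^3$, and $\Z^2 \not\cong \Z^3$ completes the argument.
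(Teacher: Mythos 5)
Your proposal is correct, and for the substantive case $n=4$ it takes a genuinely different route from the paper. The paper works over $\C$: it writes $e = e_1 + \omega e_2 + \bar\omega e_3$ with complex idempotents, observes that $e_1$ and $e_2+e_3$ individually lie in $M_k(\Q(4))$, and then runs a trace argument ($\omega\tr(e_2)+\bar\omega\tr(e_3)\in\Q[i]$ forces $\tr(e_2)=\tr(e_3)$) to get the map $[e]\mapsto \tr(e_1)\oplus 2\tr(e_2)$ into $\N\oplus 2\N$, finishing with an explicit $3\times 3$ quadripotent whose lower-right block has characteristic polynomial $x^2+x+1$ to prove surjectivity. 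You instead stay entirely over $\Q(4)$ and classify $4$-potents by the primary decomposition attached to the factorization $x^4-x = x(x-1)(x^2+x+1)$ into distinct irreducibles over $\Q[i]$, getting the complete similarity invariant $(d_0,d_1,d_2)$ and hence $\V^4(\Q(4))\cong\N^2$ directly. Your route buys a cleaner uniqueness argument (rational canonical form rather than a trace computation) and makes surjectivity automatic, since the companion matrix $C$ of $x^2+x+1$ --- which is even a rational matrix --- is itself the generator realizing the ``even'' class; the paper's route has the advantage of exhibiting the invariant concretely as a trace and of connecting visibly to the embedding $K_0^4(\Q(4))\hookrightarrow K_0^4(\C)\cong\Z^3$, which is where the $\Z\oplus 2\Z$ bookkeeping comes from. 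Two small housekeeping notes: to identify stable algebraic equivalence with stable similarity you need Proposition \ref{similarity implies algebraic} in addition to Proposition \ref{algebraic stably implies similarity} (both directions are in the paper), and your handling of $n=2,3,5$ via Theorem \ref{iso} is fine since $\Q(1)=\Q(2)=\Q\subseteq\Q[i]$, even though the paper routes $n=3$ through Theorem \ref{tripotent_thm} instead.
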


\begin{proof} Since $\Q(4)$ is a field \cite{ROS}, we have $K_0^2(\Q(4)) =
K_0(\Q(4)) \cong \Z$. The field $\Q(4)$ has characteristic $0 \neq 2$, so
Theorem \ref{tripotent_thm} implies that $K_0^3(\Q(4)) \cong
\bigl(K_0(\Q(4)\bigr)^2 \cong \Z^2$. Theorem \ref{iso} implies that we have an
isomorphism $K_0^5(\Q(4)) \cong \bigl(K_0(\Q(4)\bigr)^4 \cong \Z^4$.

However, the spectrum of $4$-potents is contained in 
\[
\Omega_3 = \Big\{0, 1, -\frac12+\frac{\sqrt{3}}2 i, -\frac12+\frac{\sqrt{3}}2
i\Big\} \subset \C
\]
which is {\bf not} contained in $\Q(4)$ since the two primitive $3$rd roots of
unity $\omega = \zeta_3 = -\frac12+\frac{\sqrt{3}}2 i$ and $\bar{\omega} =
\bar{\zeta}_3 = -\frac12-\frac{\sqrt{3}}2 i$ are not in $\Q(4)= \Q[i]$.

Given any $4$-potent $e \in M_n(\Q(4)) \subset M_n(\C)$ we can uniquely write
\[
e = e_1 + \omega e_2 + \bar{\omega} e_3,
\]
where $e_1, e_2, e_3$ are orthogonal idempotents in $M_n(\C)$ that sum to an
idempotent $e_1 + e_2 + e_3 = e^3$ in $M_n(\Q(4))$ by Lemma \ref{idempotent from
n-potent}. We thus have that
\[
\begin{aligned}
e^2 & = e_1 + \bar{\omega}e_2 + \omega e_3 \\
e^3 & = e_1 + e_2 + e_3 
\end{aligned}
\]
because $\omega^2 = \bar{\omega}, \bar{\omega}^2 = \omega$, and $\omega^3 =
\bar{\omega}^3 = 1$. Since $\omega + \bar{\omega} = -1$, this implies that the
first idempotent 
\[
e_1 = (e + e^2 + e^3)/3 \in M_n(\Q(4))
\]
and the {\it sum} of the last two idempotents
\[
e_2 + e_3 = e^3 - e_1 \in M_n(\Q(4))
\]
are both in $M_n(\Q(4))$. Using a simple trace argument and the fact that
$\omega, \bar{\omega} \not \in \Q(4)$, we conclude that
\[
\rank(e_2) = \tr(e_2) = \tr(e_3) = \rank(e_3),
\]
and so $\rank(e_2 + e_3) = \tr(e_2 + e_3) = 2 \tr(e_2)$ is even.
We then have a well-defined map
\[
\begin{aligned}
\V^4(\Q(4)) & \to \V^2(\Q(4)) \oplus 2 \V^2(\Q(4)) \cong \N \oplus 2\N \\
[e] & \mapsto [e_1] \oplus [e_2 + e_3] \cong \tr(e_1) \oplus 2\, \tr(e_2);
\end{aligned}
\]
this is because the classes of $e_1$ and $e_1 + e_2$ are preserved by (stable)
similarity, and the $K_0$-class of an idempotent in a matrix ring over a
number
field (or a PID) is the rank (= trace). It is easy to check that this map is
injective
(using $e_1 \perp e_2 + e_3$ in $M_n(\Q(4))$) and additive. The only question is
surjectivity. It suffices to show that there is a $4$-potent $e$ over $\Q(4)$
whose stable similarity class is mapped to the generator $1 \oplus 2$ of $\N
\oplus 2\N$. Consider the block diagonal matrix
\[
e = \begin{pmatrix}
1 & 0 & 0 \\
0 & 0 & i \\
0 & i & -1 
\end{pmatrix} \in M_3(\Q(4)),
\]
which is easily checked to be quadripotent. The lower right quadripotent $2
\times 2$ {\it invertible} block has the desired eigenvalues $\omega$ and
$\bar{\omega}$, and so does not diagonalize over $\Q(4)$. The result now follows
easily.
\end{proof}

\section{$n$-Homomorphisms and $K_0^n$ Functorality}

We know from Proposition \ref{K0 is a functor} that $K_0^n$ is a covariant
functor from the category of (unital) rings and ring homomorphisms to
the category of abelian groups and group homomorphisms.  However, $K_0^n$ is
actually functorial for a more general class of ring mappings.

\begin{definition}\label{definition of n-homomorphism}
Let $R$ and $S$ be rings.   An additive map (not necessarily
unital)
$\phi: R \longrightarrow S$ is called an \emph{$n$-homomorphism} if
\[
\phi(a_1a_2 \cdots a_n) = \phi(a_1)\phi(a_2)\cdots \phi(a_n)
\]
for all $a_1, a_2, \dots, a_n$ in $R$. 
\end{definition}

Obviously every (ring) homomorphism is an $n$-homomorphism, but the converse is
false
in general. For example, an {\it $AE_n$-ring} is a ring $R$ such that every
additive
map $\phi : R \to R$ is an $n$-homomorphism. Feigelstock \cite{Feig1,
Feig2} classified all {\it unital} $AE_n$-rings. The algebraic version of
$n$-homomorphism was introduced for complex algebras in \cite{HMM} and has been
carefully  studied in the case of $C^*$-algebras in \cite{PT}.  

\begin{proposition}\label{n-functorality}
Let $\phi : R \to S$ be an $n$-homomorphism between unital rings.  Then $\phi$
induces a group homomorphism 
\[
\phi_*: K_0^n(R) \longrightarrow K_0^n(S).
\]  
Furthermore, the assignment $R \mapsto K_0^n(R)$ is a covariant functor from the
category of unital rings and $n$-homomorphisms to the category of abelian groups
and ordinary group homomorphisms.
\end{proposition}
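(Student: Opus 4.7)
The plan is to reduce to the non-matrix case. First, extend $\phi$ entrywise to a map $\phi_k : M_k(R) \to M_k(S)$ for each $k \geq 1$. Since the $(i,j)$-entry of an $n$-fold matrix product is a sum of products of $n$ scalar entries, a direct calculation shows that each $\phi_k$ is again an $n$-homomorphism. The extension manifestly sends $\diag(e,f)$ to $\diag(\phi_k(e),\phi_k(f))$, so direct-sum additivity is automatic, and for any $n$-potent $e$ we have $\phi_k(e)^n = \phi_k(e^n) = \phi_k(e)$, so $n$-potency is preserved. The only nontrivial point is that $\phi$ respects algebraic equivalence of $n$-potents; from this, the induced monoid map $\V^n(R) \to \V^n(S)$, and hence the group homomorphism $\phi_* : K_0^n(R) \to K_0^n(S)$, will follow from the universal property of the Grothendieck completion.

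The main technical input is an identity controlling how $\phi$ interacts with binary products. Set $\sigma = \phi(1) \in S$, which is an $n$-potent. Writing $xy = x \cdot 1 \cdot 1 \cdots 1 \cdot y$ as a product of $n$ factors (with $n-2$ intermediate copies of $1$) and applying the $n$-homomorphism property gives
\[
\phi(xy) = \phi(x)\,\sigma^{n-2}\,\phi(y) \quad \text{for all } x, y \in R.
\]
Specializing to $x = 1$ or $y = 1$ yields $\sigma^{n-1}\phi(x) = \phi(x)\sigma^{n-1} = \phi(x)$. Expanding $1^{n-2} \cdot x \cdot 1$ as an $n$-fold product produces the refinement $\sigma^{n-2}\phi(x)\sigma = \phi(x)$; multiplying this on the left by $\sigma$ and then using $\sigma^{n-1}\phi(x) = \phi(x)$ gives the commutation relation
\[
\sigma\,\phi(x) = \phi(x)\,\sigma, \quad x \in R.
\]
Isolating this commutativity is the step I expect to be the main obstacle; once found, the remainder of the argument is essentially a one-liner.

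With these identities in hand, preservation of algebraic equivalence is immediate. Given $n$-potents $e = ab$ and $f = ba$ in $R$, set $\alpha = \phi(a)\sigma^{n-2}$ and $\beta = \phi(b)$. Then
\[
\alpha\beta = \phi(a)\,\sigma^{n-2}\,\phi(b) = \phi(ab) = \phi(e),
\]
and commuting $\sigma^{n-2}$ past $\phi(a)$ gives
\[
\beta\alpha = \phi(b)\phi(a)\sigma^{n-2} = \phi(b)\,\sigma^{n-2}\,\phi(a) = \phi(ba) = \phi(f),
\]
so $\phi(e) \sim_a \phi(f)$. Applying the same argument to the entrywise extension $\phi_k$ settles well-definedness of $\phi_*$ on classes in $\V^n$. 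Functoriality is then routine: the composition $\psi \circ \phi$ of two $n$-homomorphisms is again an $n$-homomorphism (apply the outer map's $n$-homomorphism property to the image of an $n$-tuple under $\phi$), the identity map is trivially an $n$-homomorphism, and the explicit formula $\phi_*[e] = [\phi(e)]$ makes compatibility with identity maps and composition automatic.
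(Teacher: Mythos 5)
Your proof is correct, but it reaches the key step---preservation of algebraic equivalence---by a different device than the paper. The paper makes no use of $\phi(1)$ at all: given $e=ab$, $f=ba$, it simply sets $a'=\phi(ea)\phi(f)^{n-2}$ and $b'=\phi(b)$ and observes that both $a'b'$ and $b'a'$ are literally products of $n$ elements in the image of $\phi$, so the $n$-homomorphism identity applies directly and yields $a'b'=\phi(eaf^{n-2}b)=\phi(e)$ and $b'a'=\phi(beaf^{n-2})=\phi(f)$ with no auxiliary lemmas. You instead develop a small amount of structure theory for $n$-homomorphisms: setting $\sigma=\phi(1)$, you derive $\phi(xy)=\phi(x)\sigma^{n-2}\phi(y)$, the absorption identities $\sigma^{n-1}\phi(x)=\phi(x)\sigma^{n-1}=\phi(x)$, and the commutation relation $\sigma\phi(x)=\phi(x)\sigma$ (your derivation of each of these is valid), and then take $\alpha=\phi(a)\sigma^{n-2}$, $\beta=\phi(b)$. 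Your route costs an extra lemma but buys a cleaner conceptual picture---$\phi$ behaves like a multiplicative map twisted by the $n$-potent $\sigma$, which commutes with the image---and the identity $\phi(xy)=\phi(x)\sigma^{n-2}\phi(y)$ is reusable; the paper's route is shorter and entirely self-contained. The remaining points (the entrywise extension $\phi_k$ is again an $n$-homomorphism, $n$-potents map to $n$-potents, compatibility with $\oplus$ and stabilization, and functoriality under composition) are handled the same way in both arguments.
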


\begin{proof}
For each natural number $k$, we extend $\phi$ to a map from $M_k(R)$ to $M_k(S)$
by applying $\phi$ to each matrix entry; it is easy to check this also gives us
an
$n$-homomorphism.  Moreover, $\phi$ is compatible with
stabilization of matrices; the only nonobvious point to check is that $\phi$
respects algebraic equivalence.

Let $e$ and $f$ be algebraically equivalent $n$-potents in $M_k(R)$ for some
$k$, and choose $a$ and $b$ in $M_k(R)$ so that $e = ab$ and $f = ba$.  Define
elements
$a^\prime = \phi(ea)\phi(f)^{n-2}$ and $b^\prime = \phi(b)$ in $M_k(S)$.  We
compute:
\begin{multline*}
a^\prime b^\prime =  \phi(ea)\phi(f)^{n-2}\phi(b) = 
\phi((ea)f^{n-2}b) = \\
\phi(ea(ba)^{n-2}b) = \phi(e(ab)^{n-1}) = \phi(e^n) = \phi(e).
\end{multline*}
A similar argument shows that $b^\prime a^\prime = \phi(f)$.  Therefore
$\phi$ determines a monoid homomorphism from $\V^n(R)$ to $\V^n(S)$, and hence a
group homomorphism $\phi_*: K_0^n(R) \longrightarrow K_0^n(S)$.  We leave it to
the reader to make the straightforward computations to show that we have a
covariant functor.
\end{proof}

Note that while we have an isomorphism $K_0^n(A) \cong
\bigl(K_0(A)\bigr)^{n-1}$ for $\Q(n-1)$-algebras, it is not at all clear from
the right hand side of this isomorphism that $K_0^n(A)$ is functorial for
$n$-homomorphisms.

\end{document}